\theoremstyle{plain}
\newtheorem{thm}{Theorem}[section]
\newtheorem{prop}[thm]{Proposition}
\newtheorem{lem}[thm]{Lemma}
\theoremstyle{definition}
\newtheorem{dfn}[thm]{Definition}
\newtheorem{ex}[thm]{Example}
\newtheorem{rem}[thm]{Remark}
\numberwithin{equation}{section}
\newcommand{\G}{\Gamma}
\newcommand{\g}{\gamma}
\newcommand{\go}{\omega}
\newcommand{\gD}{\Delta}
\newcommand{\sm}{\left(\begin{smallmatrix}}
\newcommand{\esm}{\end{smallmatrix}\right)}
\newcommand{\mm}{\begin{pmatrix}}
\newcommand{\emm}{\end{pmatrix}}
\newcommand{\wt}{\widetilde}
\newcommand{\wh}{\widehat}
\newcommand{\ga}{\alpha}
\newcommand{\gb}{\beta}
\newcommand{\mbb}{\mathbb}
\newcommand{\mcal}{\mathcal}
\newcommand{\gl}{\lambda}
\newcommand{\gz}{\zeta}
\newcommand{\gL}{\Lambda}
\newcommand{\mU}{\mathcal U}
\newcommand{\mV}{\mathcal V}
\newcommand{\bi}{\binom}
\newcommand{\mF}{\mathcal F}
\newcommand{\mH}{\mathcal H}
\newcommand{\mJ}{\mathcal J}
\newcommand{\fJ}{\mathfrak J}
\newcommand{\fK}{\mathfrak K}
\newcommand{\mG}{\mathcal G}
\newcommand{\mQ}{\mathcal Q}
\newcommand{\bC}{\mathbb C}
\newcommand{\bR}{\mathbb R}
\newcommand{\bZ}{\mathbb Z}
\newcommand{\md}{\operatorname{\text{$\|$}}}
\newcommand{\ww}{\widehat{u}}
\newcommand{\wv}{\widehat{v}}
\newcommand{\mW}{\mathcal W}
\newcommand{\mmV}{\mathcal V}
\newcommand{\wJ}{\widehat{\mathcal J}}
\begin{document}

\title{Symmetric tensor representations,
quasimodular forms, and weak Jacobi forms}


\author[YoungJu Choie]{YoungJu Choie$^\ast$}

\address{Department of Mathematics and
PMI (Pohang Mathematical Institute), POSTECH, Pohang 790-784,
Korea}

\email{yjc@postech.ac.kr}

\author[Min Ho Lee]{Min Ho Lee$^\dagger$}

\address{Department of Mathematics, University of Northern Iowa,
Cedar Falls, IA 50614, U.S.A.}

\email{lee@math.uni.edu}

\thanks{$^\ast$Supported in part by NRF 2009008-3919 and NRF by the Ministry of Education, Science and
Technology 2009-0094069}
\thanks{$^\dagger$Supported in part by a summer fellowship from the
  University of Northern Iowa}


\begin{abstract}
We establish a correspondence between vector-valued modular forms
with respect to a symmetric tensor representation and quasimodular
forms.  This is carried out by first obtaining an explicit isomorphism
between the space of vector-valued modular forms with respect to a
symmetric tensor representation and the space of finite sequences of
modular forms of certain type.  This isomorphism uses Rankin-Cohen brackets
and extends a result of Kuga and Shimura, who considered the case of
vector-valued modular forms of weight two.  We also obtain a correspondence
between such vector-valued modular forms and weak Jacobi forms.
\end{abstract}


\subjclass[2000]{11F11, 11F50, 11F25,}

\maketitle \pagestyle{plain}

\section{\bf{Introduction and statement of main results}}
\label{S:in}

It is well-known that the integrals of certain vector-valued
differential forms attached to automorphic forms with respect to a
Fuchsian group play an important role in the arithmetic theory of
modular correspondences, and the study of such integrals was first
introduced by Eichler \cite{Ei57} and Shimura \cite{Sh59}.

In \cite{KS60} Kuga and Shimura studied a correspondence between
vector-valued differential forms, or equivalently vector-valued modular
forms of weight two, and certain finite sequences of scalar-valued modular
forms. One of our goals in this paper is to extend such a correspondence to
the case of vector-valued modular forms of arbitrary weight.  We then use
this correspondence to obtain an isomorphism between the space of
vector-valued modular forms with respect to a symmetric tensor
representation and that of quasimodular forms.  Quasimodular forms
generalize classical modular forms and were first introduced by Kaneko and
Zagier in \cite{KZ95}.  It appears naturally in various places (see
\cite{EO1, EO2, LR}, for instance).  In fact, an explicit isomorphism can
be obtained between the space of finite sequences of modular forms of
certain type and the space of vector valued modular form with respect to a
symmetric tensor representation.  This isomorphism is defined by
using Rankin-Cohen brackets for vector-valued modular forms. On the other
hand, finite sequences of modular forms of the same type are known to
correspond to certain quasimodular forms (see \cite{L09b}) and also to weak
Jacobi forms (cf .\ \cite{EZ85}).

In this paper we establish mutual correspondences among
vector-valued modular forms with respect to a symmetric tensor
representation, finite sequences of modular forms, quasimodular
forms, and weak Jacobi forms.

Given a discrete subgroup $\G$ of $SL(2, \bR)$ and integers $k$
and $n$ with $n \geq 0$, let $M_k (\G)$ and $\wh{M}^{n+1}_k (\G,
\rho_n)$ be the space of modular forms of weight $k$ for $\G$ and
the space of vector-valued modular forms of weight $k$ for $\G$
with respect to the $n$-th symmetric tensor representation
$\rho_n$, respectively.  We also denote by $QP_{k}^{m} (\G)$ and
$QM_{k}^m (\G)$ with $m \geq 0$ the spaces of quasimodular
polynomials of degree at most $m$ and quasimodular forms depth at
most $m$, respectively, of weight $k$ for $\G$ (see Section
\ref{S:qmp}). Our main results involve establishing certain
isomorphisms contained in the following theorems, which will be
proved in Section \ref{S:pm}

\begin{thm}\label{main}
(i) Given an integer $\ell$ with $0 \leq \ell\leq n$, the formula
\begin{equation} \label{E:wa}
\mmV_{k, n, \ell}(g_{\ell}) = [g_{\ell}, \wv_n]_{n-\ell}^{(k-n
+2\ell, -n)}
\end{equation}
for $g_\ell \in M_{k-n+2\ell}(\G)$ determines a complex linear map
\begin{equation} \label{E:2p}
\mmV_{k, n, \ell}: M_{k-n+2\ell}(\G) \to \wh{M}_k^{n+1}(\G,
\rho_n) ,
\end{equation}
where $\wv_n (z) = {^t}(z^n, \ldots, z, 1) \in
\bC^{n+1}$, and $[g_{\ell}, \wv_n]_{n-\ell}^{(k-n+2\ell, -n)}$
denotes the $(n-\ell)$-th Rankin-Cohen bracket of $g_{\ell}$ and
$\wv_n$ given by \eqref{E:he}.

(ii) If $\wh{\mG}_{k-n+2\ell}^{n+1} (\G,\rho_n) = {\rm Im}\,
(\mmV_{k, n, \ell})$ denotes the image of the map \eqref{E:2p} for
$0 \leq \ell\leq n$, then $\wh{M}_k^{n+1} (\G,\rho_n)$ has a
direct sum decomposition of the form
\[ \wh{M}_k^{n+1}(\G,\rho_n) = \bigoplus^n_{\ell =0}
\wh{\mG}_{k-n+2\ell}^{n+1} (\G,\rho_n) .\]%

(iii) The map \eqref{E:2p} determines an isomorphism
\[ M_{k-n+2\ell}(\G) \cong \wh{\mG}_{k-n+2\ell}^{n+1} (\G,\rho_n)
\]%
for each $\ell \in \{ 0,1, \ldots, n \}$; hence we obtain a
canonical isomorphism \begin{equation} \label{E:m}
\wh{M}_k^{n+1}(\G,\rho_n) \cong \bigoplus^n_{\ell =0} M_{k-n
+2\ell}(\G) \end{equation}
 between vector-valued modular forms
with respect to $\rho_n$ and finite sequences of modular forms.
\end{thm}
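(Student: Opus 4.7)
I would organize the proof around the three parts of the theorem.

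\emph{Part (i).} The key point is that the vector $\wv_n$ satisfies the transformation law
\[
\wv_n(\g z) \;=\; (cz+d)^{-n}\rho_n(\g)\,\wv_n(z),
\]
so $\wv_n$ behaves like a vector-valued automorphic object of ``weight'' $-n$ for the representation $\rho_n$. Combining this with $g_\ell \in M_{k-n+2\ell}(\G)$ and invoking the transformation rule for Rankin-Cohen brackets (extended componentwise to vector-valued targets), $[g_\ell,\wv_n]_{n-\ell}^{(k-n+2\ell,-n)}$ transforms with total weight $(k-n+2\ell)+(-n)+2(n-\ell)=k$ and representation $\rho_n$. Holomorphy on the upper half-plane is clear from holomorphy of $g_\ell$ and the polynomiality of $\wv_n$, while the cusp condition is inherited from $g_\ell$.

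\emph{Part (iii), injectivity.} The Rankin-Cohen bracket has the explicit form
\[
\mmV_{k,n,\ell}(g_\ell)(z) \;=\; \sum_{r=0}^{n-\ell} c_r\,g_\ell^{(n-\ell-r)}(z)\,\wv_n^{(r)}(z),
\]
with $c_r=(-1)^r\binom{k+\ell-1}{n-\ell-r}\binom{-\ell-1}{r}$. I would show that the $n+1$ vectors $\wv_n^{(0)}(z),\ldots,\wv_n^{(n)}(z)$ form a basis of $\bC^{n+1}$ at every point $z$, since the matrix with these as columns is anti-upper-triangular with nonzero antidiagonal entries. Consequently, $\mmV_{k,n,\ell}(g_\ell)\equiv 0$ forces $c_r\,g_\ell^{(n-\ell-r)}\equiv 0$ for each $0\le r\le n-\ell$. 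The case $r=n-\ell$, combined with the computation $c_{n-\ell}=\binom{n}{\ell}\neq 0$, immediately yields $g_\ell=0$.

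\emph{Parts (ii) and (iii), direct sum and isomorphism.} Let $\Phi_{k,n}=\bigoplus_{\ell=0}^n \mmV_{k,n,\ell}$. Directness of the sum follows by the same independence argument applied to an identity $\sum_\ell \mmV_{k,n,\ell}(g_\ell)=0$: regrouping by the basis vectors $\wv_n^{(r)}$ yields a triangular linear system, which solved from $r=n$ downward forces each $g_\ell=0$ in turn. For surjectivity, any $F\in\wh M_k^{n+1}(\G,\rho_n)$ admits a unique pointwise expansion $F(z)=\sum_{r=0}^n h_r(z)\wv_n^{(r)}(z)$ with holomorphic $h_r$, and the same triangular inversion (now inhomogeneous) produces candidate holomorphic functions $g_\ell$. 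The remaining task is to verify that each such $g_\ell$ is modular of weight $k-n+2\ell$; this follows from the transformation law of $F$ combined with the explicit quasimodular-style transformation of each $\wv_n^{(r)}$, whose non-modular corrections cancel exactly due to the Rankin-Cohen coefficients.

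\emph{Main obstacle.} The most delicate step is the surjectivity verification: showing that the $g_\ell$ extracted from the decomposition $F=\sum_r h_r\wv_n^{(r)}$ are genuinely modular, rather than merely holomorphic. Tracking the non-modular correction terms in the transformation of each $\wv_n^{(r)}$ and verifying that they cancel in exactly the pattern dictated by the Rankin-Cohen coefficients is where the bulk of the bookkeeping resides.
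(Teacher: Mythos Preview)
Your overall strategy is correct and structurally close to the paper's: both arguments hinge on the fact that $\wv_n,\wv_n',\ldots,\wv_n^{(n)}$ form a pointwise basis of $\bC^{n+1}$ and that the map $(g_0,\ldots,g_n)\mapsto\sum_\ell\mmV_{k,n,\ell}(g_\ell)$ is triangular in this basis. The paper, however, packages the two nontrivial steps differently. For part~(i), rather than invoking a Rankin--Cohen transformation law at the negative weight $-n$ (which needs justification beyond the standard positive-weight statement), the paper forms the Cohen--Kuznetsov lifting $\wt g_\ell(z,X)\in\mJ_{k-n+2\ell}(\G)$ and a vector-valued Cohen--Kuznetsov lifting $\wh\Phi_{\wv_n}(z,X)\in\wJ_{-n}(\G,\rho_n)$, multiplies $\wt g_\ell(z,-X)\,\wh\Phi_{\wv_n}(z,X)$ so that the exponential factors cancel, and reads off the bracket as a coefficient; this yields the weight-$k$ modularity directly. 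For parts~(ii)/(iii), the paper replaces your proposed cancellation-tracking with the Kuga--Shimura identity $L_n(\g z)^{-1}\rho_n(\g)L_n(z)=\fJ(\g,z)^n A$ for an explicit lower-triangular matrix $A$ (here $L_n(z)$ is, up to diagonal factorials, the matrix with columns $\wv_n^{(n)},\ldots,\wv_n,\,$). This gives immediately (Lemma~\ref{KS-2}) that the \emph{first nonzero} component of $L_n(z)^{-1}F(z)$ is already a scalar modular form; one then subtracts its image under $\mmV_{k,n,\ell}$ and iterates. Thus your ``main obstacle''---showing the extracted $g_\ell$ are genuinely modular---evaporates once you use the Kuga--Shimura triangular form of $\rho_n$, and no Rankin--Cohen-coefficient bookkeeping is needed. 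Your route would work, but the paper's use of Jacobi-like forms for (i) and the Kuga--Shimura matrix for (ii) is what removes the heavy lifting.
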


We can also consider an explicit linear map from vector-valued modular
forms to scalar-valued modular forms in \eqref{E:2p} by using Rankin-Cohen
brackets for vector-valued modular forms as in the next theorem.

\begin{thm}\label{main2}
Given integers $k$ and $n$ with $n \geq 0$, the formula
\[ \mW_{k,n} (F) =
([[\ww_n, F]]_0^{(-n,k)}, [[\ww_n, F]]_1^{(-n,k)}, \ldots,
[[\ww_n, F]]_n^{(-n,k)}) \]%
for $F \in \wh{M}^{n+1}_k (\G, \rho_n)$ determines a complex
linear map
\begin{equation} \label{E:7t}
\mW_{k,n}: \wh{M}^{n+1}_k (\G, \rho_n) \to \bigoplus^n_{\ell =0}
M_{k-n +2\ell}(\G),
\end{equation}
where the brackets are the Rankin-Cohen brackets in \eqref{E:hh}
and $\ww_n= {^t} (1, (-z), .., (-z)^n).$
\end{thm}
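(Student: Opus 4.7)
The plan is to establish two things: that $\mW_{k,n}$ is complex linear, and that each component $[[\ww_n, F]]_\ell^{(-n,k)}$ belongs to $M_{k-n+2\ell}(\G)$. Linearity is immediate from the bilinearity of the Rankin-Cohen bracket \eqref{E:hh}, so the substantive step is verifying the modular transformation law of each component.

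First I would check that the polynomial vector $\ww_n(z) = {^t}(1, -z, (-z)^2, \ldots, (-z)^n)$ itself transforms as a vector-valued modular object of weight $-n$ with respect to a representation dual to $\rho_n$. For $\g = \sm a & b \\ c & d \esm \in \G$, a direct binomial expansion of $(-\g z)^k = \bigl(\tfrac{-az-b}{cz+d}\bigr)^k$ yields an identity relating $\ww_n(\g z)$ and $\ww_n(z)$ via the cocycle $(cz+d)^n$ and the matrix of $\rho_n(\g)$ acting on the dual space; the sign convention $(-z)^k$ in the entries is precisely what is required to make this identification work cleanly. This is the mirror of the analysis of $\wv_n$ that underlies Theorem \ref{main}, with $\ww_n$ playing the role dual to $\wv_n$.

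Given that $\ww_n$ is a weight $-n$ modular vector in the dual of $\rho_n$, the modularity of $[[\ww_n, F]]_\ell^{(-n,k)}$ follows from the defining equivariance property of the vector-valued Rankin-Cohen bracket \eqref{E:hh}: pairing a $\rho_n^*$-valued form of weight $a$ with a $\rho_n$-valued form of weight $b$ via the natural duality and taking the $\ell$-th bracket produces a scalar modular form of weight $a+b+2\ell$. Substituting $a=-n$ and $b=k$ gives the claimed weight $k-n+2\ell$, so $\mW_{k,n}(F) \in \bigoplus_{\ell=0}^n M_{k-n+2\ell}(\G)$. Holomorphy at the cusps is immediate because the bracket is a polynomial differential operator applied to the holomorphic $F$ and to the polynomial $\ww_n$.

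The main obstacle is the careful verification of the transformation rule for $\ww_n$ and of the scalar-valued output of \eqref{E:hh} on the pair $(\ww_n, F)$; these are finite, concrete computations involving the explicit matrix entries of $\rho_n$ acting on the symmetric tensors and the cocycle $(cz+d)$, but they must be executed with attention to signs and index conventions. A natural consistency check is that $\mW_{k,n}$ ought to serve as an inverse to the combined map $\bigoplus_\ell \mmV_{k,n,\ell}$ of Theorem \ref{main}, and verifying this identity at the level of bracket formulae provides an independent confirmation that the modular weights match up as asserted.
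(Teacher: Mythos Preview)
Your proposal is correct and follows essentially the same route as the paper: the paper's proof simply invokes Proposition~\ref{syme} (which establishes $\ww_n \in \wh{M}^{n+1}_{-n}(\G,\rho_n^*)$, exactly the dual transformation law you set out to verify) and then applies Lemma~\ref{L:js}(ii) to conclude that each bracket $[[\ww_n,F]]_\ell^{(-n,k)}$ lands in $M_{k-n+2\ell}(\G)$. Your remarks on cusp holomorphy are unnecessary here since the paper's working definition of modular form suppresses growth conditions, and your closing speculation about $\mW_{k,n}$ inverting $\bigoplus_\ell \mmV_{k,n,\ell}$ goes beyond what the theorem asserts, but neither affects the argument.
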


Given an element $(f_0, f_1, \ldots, f_n) \in \bigoplus^n_{\ell
=0} M_{k-n +2\ell}(\G)$, the corresponding polynomial
$\sum^n_{r=0} f_r (z) X^r$ belongs to $MP^n_{k-n} (\G)$, and this
correspondence determines an isomorphism
\[ \bigoplus^n_{\ell =0} M_{k-n +2\ell}(\G) \cong MP^n_{k-n} (\G) .\]
On the other hand, it is also known that
\[ QP^n_j (\G) \cong MP^n_{j -2n} (\G) \]
for $j >2n$ (see Section \ref{S:qmp}).  From these isomorphisms and
\eqref{E:m} we obtain the following result.

\begin{thm}\label{main00}
There is a canonical isomorphism.
\begin{equation} \label{E:6w}
\wh{M}^{n+1}_k (\G, \rho_n) \cong QP^n_{k+n} (\G)
\end{equation}
between vector-valued modular forms and quasimodular polynomials for $k
>n$.
\end{thm}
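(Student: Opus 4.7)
The proof will be a direct composition of three isomorphisms, two of which have already been established (or quoted) in the excerpt, and a third which is a simple bookkeeping identification of a finite direct sum of spaces of modular forms with a space of modular polynomials.

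First, I would invoke Theorem \ref{main}(iii), which supplies the canonical isomorphism
\[
\wh{M}_k^{n+1}(\G,\rho_n) \;\cong\; \bigoplus_{\ell=0}^{n} M_{k-n+2\ell}(\G),
\]
given concretely by the Rankin--Cohen constructions $\mmV_{k,n,\ell}$ (with $\mW_{k,n}$ providing an inverse, up to nonzero constants). Next I would use the identification
\[
\Phi \colon \bigoplus_{\ell=0}^{n} M_{k-n+2\ell}(\G) \;\longrightarrow\; MP^{n}_{k-n}(\G), \qquad (f_0, f_1, \ldots, f_n) \longmapsto \sum_{r=0}^{n} f_r(z)\, X^r,
\]
asserted just after Theorem \ref{main2}; this is essentially tautological from the definition of a modular polynomial of degree $\leq n$ and weight $k-n$ as a polynomial in $X$ whose $X^r$-coefficient lies in $M_{(k-n)+2r}(\G)$.

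Finally, I would invoke the isomorphism $QP^n_j(\G) \cong MP^n_{j-2n}(\G)$ (valid for $j > 2n$) recalled in Section~\ref{S:qmp}. Applying this with $j = k+n$ gives the parameter condition $k+n > 2n$, i.e.\ $k > n$, which is exactly the hypothesis in the theorem, and yields
\[
QP^n_{k+n}(\G) \;\cong\; MP^n_{(k+n)-2n}(\G) \;=\; MP^n_{k-n}(\G).
\]
Composing the three isomorphisms above produces the desired canonical isomorphism $\wh{M}^{n+1}_k(\G,\rho_n) \cong QP^n_{k+n}(\G)$.

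There is essentially no obstacle, since every ingredient is already in hand: the deep step (via Rankin--Cohen brackets) is Theorem \ref{main}, while the remaining two identifications are formal. The only point that requires care is the bookkeeping of weights and degrees --- verifying that the indices $k-n+2\ell$ on the right of Theorem \ref{main} match the coefficients of a polynomial in $MP^n_{k-n}$, and that the shift $j \mapsto j-2n$ between quasimodular polynomials and modular polynomials lines up the weight $k+n$ on the quasimodular side with the weight $k-n$ on the modular-polynomial side. Because the isomorphism $MP^n_{k-n}(\G) \cong QP^n_{k+n}(\G)$ is canonical (given by a fixed change of variables, cf.\ Section \ref{S:qmp}), the resulting composition is canonical as claimed.
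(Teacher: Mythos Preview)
Your proposal is correct and follows essentially the same route as the paper's own proof: invoke Theorem~\ref{main}(iii) to pass from $\wh{M}_k^{n+1}(\G,\rho_n)$ to $\bigoplus_{\ell=0}^n M_{k-n+2\ell}(\G)$, identify this direct sum with $MP^n_{k-n}(\G)$ via Remark~\ref{R:8n}(i), and then apply the isomorphism $MP^n_{k-n}(\G)\cong QP^n_{k+n}(\G)$ from \eqref{E:nw}. Your explicit verification that the condition $\gl>2m$ in Theorem~\ref{MtQ} becomes $k>n$ is a helpful addition that the paper leaves implicit.
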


The next theorem gives an expression of the formula for the vector-valued
function corresponding to a quasimodular form via the isomrphism
\eqref{E:6w} in terms of derivatives of $\wv_n$ and the coefficients of
the given quasimodular form.

\begin{thm}\label{main0}
Let $\mF$ be the complex algebra of holomorphic functions on the Poincar\'e
upper half plane $\mH$.  If $F (z,X)$ is a polynomial of degree at most $n$
over the ring of holomorphic functions on $\mH$ given by
\[ F(z,X) = \sum^n_{r=0} f_r (z) X^r ,\]
we set
\begin{equation} \label{E:9i}
\mU_{n} (F(z,X))= \sum^n_{\ell=0} (-1)^{n} (n -\ell)! D^{\ell} (\wv_n)
f_{\ell} ,
\end{equation}
where $\wv_n$ is as in Theorem \ref{main} and $D = d/dz$.  Then this
formula determines an isomorphism
\[ \mU_{n}: QP_{k+n}^n(\G) \xrightarrow{\approx} \wh{M}_k^{n+1}(\G, \rho_n) \]
of complex vector spaces for each $k > n$.
\end{thm}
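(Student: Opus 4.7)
The plan is to realize $\mU_{n}$ as the composite of the three isomorphisms
\[
QP^{n}_{k+n}(\G) \xrightarrow{\;\Phi_{1}\;} MP^{n}_{k-n}(\G) \xrightarrow{\;\Phi_{2}\;} \bigoplus_{\ell=0}^{n} M_{k-n+2\ell}(\G) \xrightarrow{\;\Phi_{3}\;} \wh{M}^{\,n+1}_{k}(\G,\rho_{n})
\]
used in deriving \eqref{E:6w}, and then to verify that this composite is given by the closed-form expression \eqref{E:9i}. Here $\Phi_{1}$ is the shift identifying quasimodular polynomials with modular polynomials (valid because $k>n$), $\Phi_{2}$ is the tautological reading-off of coefficients, and $\Phi_{3}=\sum_{\ell=0}^{n}\mmV_{k,n,\ell}$ is the direct-sum map provided by Theorem \ref{main}. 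Since each $\Phi_{i}$ is an isomorphism of complex vector spaces, so is $\Phi_{3}\circ\Phi_{2}\circ\Phi_{1}$; it therefore suffices to identify this composite with $\mU_{n}$.

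The identification is a direct but delicate computation. Given $F(z,X)=\sum_{r=0}^{n}f_{r}(z)X^{r}\in QP^{n}_{k+n}(\G)$, the map $\Phi_{1}$ produces a modular polynomial $\tilde F(z,X)=\sum_{\ell=0}^{n}\tilde f_{\ell}(z)X^{\ell}$ whose coefficients are explicit linear combinations of the $f_{r}$ and their iterated derivatives $D^{s}f_{r}$. Applying $\Phi_{3}$ and expanding via the Rankin--Cohen formula in \eqref{E:he} yields
\[
\sum_{\ell=0}^{n} [\tilde f_{\ell},\wv_{n}]_{n-\ell}^{(k-n+2\ell,-n)} = \sum_{\ell=0}^{n}\sum_{j=0}^{n-\ell} c_{\ell,j}\,D^{n-\ell-j}(\tilde f_{\ell})\,D^{j}(\wv_{n})
\]
with explicit binomial coefficients $c_{\ell,j}$. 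Substituting the expressions for $\tilde f_{\ell}$ and regrouping the result in the form $\sum_{r,s}A_{r,s}\,D^{s}(\wv_{n})\,D^{r-s}(f_{r})$, one must show that all contributions with $r>s$ vanish and that the coefficient of $D^{s}(\wv_{n})\,f_{s}$ equals $(-1)^{n}(n-s)!$. This is the heart of the matter: the clean appearance in \eqref{E:9i} of only the derivatives $D^{\ell}(\wv_{n})$, with no derivatives of the $f_{r}$ surviving, reflects a precise compatibility between the shift $\Phi_{1}$ and the Rankin--Cohen bracket.

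The main obstacle is this combinatorial collapse, which reduces to a finite-sum identity in products of binomial coefficients coming from $\Phi_{1}$ and from $c_{\ell,j}$. I expect to prove it by a Chu--Vandermonde summation after a suitable reindexing of $(\ell,j,s)$, or equivalently by reading the triple sum as the coefficient extraction from a generating function of the form $(1-t)^{a}(1+t)^{b}$, which then telescopes. Once this identity is established, the composite $\Phi_{3}\circ\Phi_{2}\circ\Phi_{1}$ coincides with the formula $\mU_{n}$ of \eqref{E:9i} on the nose, and the theorem follows. The hypothesis $k>n$ enters only through the validity of $\Phi_{1}$; all other steps are purely algebraic.
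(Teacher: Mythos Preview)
Your approach is genuinely different from the paper's, and the difference is worth understanding. You propose to prove that $\mU_n$ coincides with the composite $\Phi_3\circ\Phi_2\circ\Phi_1$ of the three isomorphisms appearing in Theorem~\ref{main00}, and to do this by a direct combinatorial expansion: write out $\Xi^n_{k+n}$ via \eqref{E:j7}, feed the result through the Rankin--Cohen brackets \eqref{E:he}, and then establish a Chu--Vandermonde--type identity that kills all terms involving $D^{s}f_r$ with $s>0$. The paper bypasses this combinatorics entirely. It lifts $F(z,X)$ to a Jacobi-like form $\Phi_F(z,X)\in\mJ_{k-n}(\G)$ with $f_r=\tfrac{1}{r!}\phi_{n-r}$, multiplies by the Cohen--Kuznetsov lifting $\wt\Phi_{\wv_n}(z,X)$ of $\wv_n$ from Lemma~\ref{ind} (with $X\mapsto -X$ in the first factor so the exponential factors cancel), and reads off the $X^n$-coefficient of the product. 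That coefficient is exactly \eqref{E:9i}, and the Jacobi-like transformation law immediately places it in $\wh M^{n+1}_k(\G,\rho_n)$. The generating-function trick is doing for free the very collapse you plan to obtain by summation identities.

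There is also a structural risk in your plan that you should confront before investing in the combinatorics: you are assuming in advance that $\mU_n$ \emph{equals} $\Phi_3\circ\Phi_2\circ\Phi_1$. Nothing in the paper asserts this; Theorem~\ref{main00} only says the two spaces are isomorphic, and two isomorphisms between the same spaces need not agree (even up to scalar). If your computation produced, say, $\Phi_3\circ\Phi_2\circ\Phi_1=c\,\mU_n+R$ with $R$ a nonzero map involving derivatives of the $f_r$, your argument would not recover. The paper's route avoids this risk because it never compares $\mU_n$ to the composite: it shows directly that $\mU_n$ lands in $\wh M^{n+1}_k(\G,\rho_n)$ and then invokes \eqref{E:6w}. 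If you want to keep your strategy, I would recommend first checking the claimed equality $\mU_n=\Phi_3\circ\Phi_2\circ\Phi_1$ in the case $n=1$ (where $\Xi^1_{k+1}$ and the brackets are short) before committing to the general identity.
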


\begin{ex}
We consider the weight 12 cusp form
\begin{equation} \label{E:j4}
\gD (z) =q \prod_{n=1}^{\infty} (1 -q^n)^{24}
\end{equation}
for $\G (1) = SL(2, \bZ)$ with $q = e^{2\pi i z}$.
Using \eqref{E:wa} for $n =2$, $k =14$, $\ell =0$ and $g_0 = \gD$, we see that
\begin{equation} \label{E:2w}
\mmV_{14, 2, 0} (\gD) (z) = \gD'' (z) \sm z^2 \\ z \\ 1\esm
+13 \gD' (z) \sm 2z\\1\\0\esm  + 78 \gD (z) \sm 2 \\ 0 \\ 0 \esm \in \wh{M}_{14}^{3}(\G(1), \rho_2) .
\end{equation}
On the other hand, for $n=2$ the formula \eqref{E:9i}
determines a vector-valued modular form
\begin{equation} \label{E:3w}
\mU_2 (F (z,X)) = 2 \wv_2 (z) f_0 (z) + (D \wv_2) (z) f_1 (z)
+ (D^2 \wv_2) (z) f_2 (z) \in \wh{M}_{14}^{3}(\G(1), \rho_2)
\end{equation}
corresponding to a quasimodular polynomial $F(z,X) \in QP^2_{16}
(\G)$. By comparing \eqref{E:2w} and \eqref{E:3w} we obtain the
quasimodular polynomial
\[ (\mU_{2}^{-1} \circ \mmV_{14, 2, 0}) (\gD) (z,X)
= \frac 12 \gD'' (z) + 13 \gD' (z) X + 78 \gD (z) X^2 \in QP^2_{16} (\G) .\]
\end{ex}

\bigskip

\section{\bf{Vector-valued modular forms}}

In this section we review the definition of vector-valued modular
forms and introduce Rankin-Cohen brackets for such modular forms.

Let $\mH$ be the Poincar\'e upper half plane on which the group
$SL(2,\bR)$ acts as usual by linear fractional transformations.
Thus, if $z \in \mH$ and $\g = \sm a&b\\
c&d \esm \in SL(2,\bR)$, we have
\[ \g z = \frac {az+b} {cz+d}. \]%
For such $z$ and $\g$, we set
\begin{equation} \label{E:mz}
\fJ(\g,z) = cz+d , \quad \fK (\g,z) = \frac c{cz+d}.
\end{equation}
Then the resulting maps $\fJ, \fK: SL(2,\bR) \times \mH \to \bC$
satisfy
\[ \fJ(\g \g', z) = \fJ (\g, \g' z) \fJ (\g', z) ,\]
\[ \fK (\g, \g' z) = \fJ (\g', z)^2 (\fK (\g \g',z) - \fK (\g',z)) \]
for all $z \in \mH$ and $\g, \g' \in SL(2,\bR)$.

Let $\mF$ be the ring of holomorphic functions on $\mH$, and let
$\wh{\mF}^n$ with $n >0$ be the space of $\bC^n$-valued
holomorphic functions on $\mcal H$.  Throughout this paper, we
shall use ${^t}(\cdot)$ to denote the transpose of the matrix
$(\cdot)$. In particular, if $x_1, \ldots, x_\ell \in \bC$, the
corresponding column vector belonging to $\bC^\ell$ will be
denoted by ${^t}(x_1, \ldots, x_\ell)$.  Let $\rho: \G \to GL(n,
\bR)$ be a representation of a discrete subgroup $\G$ of $SL(2,\bR)$, and
let
\[ \rho^*: \G \to GL(n,\bR) \]
be the contragredient, or the inverse transpose, of $\rho$, so that
\[ \rho^* (\g) = {^t}\rho (\g)^{-1} \]%
for all $\g \in SL(2, \bR)$. If $f \in \mF$, $\wh{f} \in
\wh{\mF}^n$, $\g \in SL(2,\mbb R)$ and $\ell \in \mbb Z$, then we
set
\[ (f \mid_\ell \g) (z) = \fJ (\g,z)^{-\ell} f (\g z), \quad (\wh{f}
\mid_\ell \g) (z) = \fJ (\g,z)^{-\ell} \wh{f} (\g z) \]%
for all $z \in \mcal H$.  We now modify the usual definition of
modular forms by suppressing the growth condition at the cusps.

\begin{dfn} (i) Given an integer $k$, a {\em modular form of weight $k$
for $\G$\/} is an element $f \in \mF$ satisfying
\[ f \mid_k \g = f \]
for all $\g \in \G$.

(ii) An element $\wh{f} \in \wh{\mF}^n$ is a {\em vector-valued
modular form of weight $k$ for $\G$ with respect to $\rho$\/} if
it satisfies
\[ \wh{f} \mid_k \g = \rho(\g) \wh{f} \]
for all $\g \in \G$.
\end{dfn}

We shall denote by $M_k (\G)$ and $\wh{M}^n_k (\G, \rho)$ the
spaces of modular forms for $\G$ and vector-valued modular forms
for $\G$ with respect to $\rho$, respectively, of weight $k$.

Rankin-Cohen brackets are bilinear maps of scalar-valued modular forms (see
e.g. \cite{CM97}), and they can be extended to the vector-valued case as
described in the following lemma.

\begin{lem} \label{L:js}
(i) Given nonnegative integers $\gl_1, \gl_2, w$ and vector-valued
modular forms $\phi_1 \in \wh{M}^{n_1}_{\gl_1} (\G, \rho_1)$ and
$\phi_2 \in \wh{M}^{n_2}_{\gl_2} (\G, \rho_2)$, we set
\begin{equation} \label{E:w8}
[ \phi_1, \phi_2 ]^{(\gl_1, \gl_2)}_w  (z)= \sum^w_{r=0} (-1)^r
\bi {\gl_1 +w-1} {w-r} \bi {\gl_2 +w-1} {r} \phi^{(r)}_1 (z)
\otimes \phi^{(w-r)}_2 (z)
\end{equation}
for all $z \in \mH$.  Then $[ \phi_1, \phi_2 ]^{(\gl_1, \gl_2)}_w$
is a vector-valued modular form belonging to $M^{n_1 n_2}_{\gl_1
+\gl_2 +2w} (\G, \rho_1 \otimes \rho_2)$.

(ii) Given vector-valued modular forms $\phi \in \wh{M}^n_\ga (\G,
\rho)$ and $\psi \in \wh{M}^n_\gb (\G, \rho^*)$ with $\ga, \gb
\geq 0$ and a nonnegative integer $w$, we set
\begin{equation} \label{E:hh}
[[ \phi, \psi ]]^{(\ga, \gb)}_w  (z)= \sum^w_{r=0} (-1)^r \bi {\ga
+w-1} {w-r} \bi {\gb +w-1} {r} {^t}(\phi^{(r)} (z)) \psi^{(w-r)}
(z)
\end{equation}
for all $z \in \mH$, where $\rho^*$ is the contragredient of $\rho$.
Then $[[ \phi, \psi ]]^{(k, \ell)}_w$ is a
modular form belonging to $M_{k + \ell +2w} (\G)$.
\end{lem}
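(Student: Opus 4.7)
The plan is to adapt the standard proof of the scalar Rankin-Cohen bracket theorem (see \cite{CM97}) to the vector-valued setting, exploiting the fact that for each fixed $\g \in \G$ the matrix $\rho(\g)$ is constant in $z$ and hence commutes with differentiation and with summation over the bracket indices.

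The first step is to establish by induction on $r$ the derivative transformation formula
\[ \phi^{(r)}(\g z) = \fJ(\g,z)^{\gl + 2r}\, \rho(\g) \sum_{j=0}^{r} \bi{r}{j} \frac{(\gl + r -1)!}{(\gl + j - 1)!}\, \fK(\g, z)^{r-j} \phi^{(j)}(z) \]
for any $\phi \in \wh{M}^n_\gl(\G, \rho)$. The base case $r = 0$ is simply the definition of a vector-valued modular form; the inductive step follows from $\frac{d}{dz}[\phi^{(r)}(\g z)] = \phi^{(r+1)}(\g z)\, \fJ(\g,z)^{-2}$, combined with the identities $\frac{d}{dz}\fK(\g, z) = -\fK(\g, z)^2$ and $c = \fK(\g,z)\,\fJ(\g,z)$, followed by re-indexing the sum.

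For part (i), I substitute this formula into \eqref{E:w8} evaluated at $\g z$. Each summand $\phi_1^{(r)}(\g z) \otimes \phi_2^{(w-r)}(\g z)$ acquires the common scalar factor $\fJ(\g,z)^{\gl_1 + \gl_2 + 2w}$, the matrix factor $\rho_1(\g) \otimes \rho_2(\g) = (\rho_1 \otimes \rho_2)(\g)$, and an inner double sum over $j_1 \in \{0,\ldots,r\}$ and $j_2 \in \{0,\ldots,w-r\}$ involving a power $\fK(\g,z)^{(r-j_1)+(w-r-j_2)}$. Collecting all contributions by the total power $s$ of $\fK(\g,z)$, one checks that for every $s \geq 1$ the resulting signed binomial sum in $r$ vanishes by the classical Rankin-Cohen combinatorial identity — precisely the identity that makes the scalar bracket modular, and which depends only on the weights $\gl_1, \gl_2, w$, not on the functions or the representations. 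The only surviving contribution is the $s = 0$ term (i.e.\ $j_1 = r$, $j_2 = w - r$), which reassembles $[\phi_1, \phi_2]^{(\gl_1, \gl_2)}_w(z)$ and yields the required transformation law.

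For part (ii), the argument is the same except that the tensor product is replaced by the scalar pairing ${^t}\phi^{(r)}\psi^{(w-r)}$. Applying the derivative formula to $\phi \in \wh{M}^n_\ga(\G, \rho)$ produces a factor ${^t}\rho(\g)$ upon transposition, while applying it to $\psi \in \wh{M}^n_\gb(\G, \rho^*)$ produces $\rho^*(\g) = {^t}\rho(\g)^{-1}$; these cancel to the identity, so the pairing is genuinely scalar, and the same combinatorial identity kills every positive power of $\fK(\g,z)$. The main technical obstacle is the combinatorial cancellation in the signed double sum, but since this identity is already established in the scalar theory and the representation-theoretic data (either $(\rho_1 \otimes \rho_2)(\g)$ or the cancellation ${^t}\rho(\g) \rho^*(\g) = I$) factors cleanly outside the sum, no new combinatorial content is required beyond careful bookkeeping.
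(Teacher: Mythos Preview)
Your proof is correct. Note, however, that the paper does not actually prove this lemma: it simply cites \cite{L07x} (Lee, \emph{Vector-valued Jacobi-like forms}). Your direct argument --- establishing the derivative transformation law by induction and then invoking the standard Rankin--Cohen combinatorial cancellation, with the constant matrix $\rho(\g)$ (respectively the pair ${^t}\rho(\g)$, $\rho^*(\g)$) factoring cleanly through --- is precisely the expected adaptation of the scalar proof in \cite{CM97,Za94} to the vector-valued setting, and is in the same spirit as the cited reference. Nothing is missing; the only point worth flagging is that your factorial expression $(\gl+r-1)!/(\gl+j-1)!$ should be read as a Pochhammer-type product so that it remains valid when $\gl$ is later allowed to be negative (as the paper does in the Remark following the lemma), but for the nonnegative weights in the lemma statement itself your formulation is fine.
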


\begin{proof}
See \cite{L07x}.
\end{proof}

From Lemma \ref{L:js} we obtain the bilinear maps
\[ [ \; , \; ]^{(\gl_1, \gl_2)}_w: \wh{M}^{n_1}_{\gl_1}
(\G, \rho_1) \times \wh{M}^{n_2}_{\gl_2} (\G, \rho_2) \to M^{n_1
n_2}_{\gl_1 +\gl_2 +2w} (\G, \rho_1 \otimes \rho_2) ,\]
\begin{equation} \label{E:bi}
[[ \;, \; ]]^{(\ga, \gb)}_w: \wh{M}^n_k (\G, \rho)
\times \wh{M}^n_\ell (\G, \rho^*) \to M_{k + \ell +2w} (\G) ,
\end{equation}
which may be regarded as Rankin-Cohen brackets for vector-valued
modular forms.    We can consider the bracket given by
\eqref{E:w8} when one of the modular forms is scalar valued by
considering it as a 1-dimensional vector-valued modular form with
respect to the trivial representation.  Thus, for example, we have
a bilinear map of the form
\[ [ \; , \; ]^{(\gl, \mu)}_w: M_{\gl}
(\G) \times \wh{M}^{n}_{\mu} (\G, \rho) \to M^{n}_{\gl +\mu +2w}
(\G, \rho) ,\]%
given by
\begin{equation} \label{E:he}
[ f, \phi ]^{(\gl, \mu)}_w  = \sum^w_{r=0} (-1)^r \bi {\gl +w-1}
{w-r} \bi {\mu +w-1} {r} f^{(r)} \phi^{(w-r)}
\end{equation}
for $f \in M_{\gl} (\G)$ and $\phi \in \wh{M}^{n}_{\mu} (\G,
\rho)$.

\begin{rem}Although the proof of Lemma \ref{L:js} in \cite{L07x} was
given for vector-valued modular forms of nonnegative weight, we
shall use the same formulas to consider brackets for modular forms
of negative weight by using
\[ \bi kr = \frac {k (k-1) \cdots (k-r+1)} {r!} \]
for $k,r \in \bZ$ with $r>0$.  In particular, the bracket in the
statement of Theorem \ref{main} is the extended version of the
Rankin-Cohen bracket in \eqref{E:he}.  Similarly, the bracket in
\eqref{E:hh} can also be extended to the negative weight case, and
such brackets are used in Theorem \ref{main2}.
\end{rem}

\bigskip

\section{\bf{Symmetric tensor representations}}

In this section we introduce certain vector-valued modular forms
with respect to symmetric tensor representations and discuss some
of their properties.

We fix a positive integer $n$, and denote by
\[ \rho_n: SL(2,\bR) \to GL(n+1,\bR) \]
the $n$-th symmetric tensor representation of $SL(2,\bR)$ given by
\begin{equation} \label{E:jp}
\rho_n (\g) \bi {z_1} {z_2}^n = \biggl( \g \bi {z_1} {z_2}
\biggr)^n
\end{equation}
for all $\g \in SL(2,\bR)$, where
\[ \bi {z_1} {z_2}^n = {^t}(z_1^n, z_1^{n-1} z_2, \ldots, z_1
z_2^{n-1}, z_2^n) \in \bC^{n +1} \]%
with $\sm z_1\\ z_2 \esm \in \bC^2$.

\begin{prop}\label{syme}
Let $\G$ be a discrete subgroup of $SL(2, \bR)$, and let $\wv_n ,
\ww_n \in \wh{\mF}^n$ be vector-valued functions on $\mH$ given by
\begin{equation} \label{E:mv}
\wv_n (z) = \bi {z}1^n ,\quad \ww_n (z) = \bi 1{-z}^n
\end{equation}
for all $z \in \bC$.  Then $\wv_n$ and $\ww_n$ are vector-valued
modular forms with
\[ \wv_n \in \wh{M}^{n+1}_{-n} (\G, \rho_n), \quad \ww_n \in
\wh{M}^{n+1}_{-n} (\G, \rho^*_n) ,\]%
where $\rho^*_n: SL(2,\bR) \to GL(n+1,\bR)$ is the contragredient
of $\rho_n$.
\end{prop}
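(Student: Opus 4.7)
The plan is to verify each transformation law by a direct computation, using the defining formula \eqref{E:jp} of $\rho_n$ as the central tool.

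For $\wv_n$, I start from $(\wv_n\mid_{-n}\g)(z) = (cz+d)^n\binom{\g z}{1}^n$ and exploit the elementary identity
\[ (cz+d)\binom{\g z}{1} = \binom{az+b}{cz+d} = \g\binom{z}{1}, \]
which holds for every $\g \in SL(2,\bR)$. Pulling the scalar factor $(cz+d)$ inside the $n$-th symmetric power then gives $(cz+d)^n\binom{\g z}{1}^n = \bigl(\g\binom{z}{1}\bigr)^n$, and \eqref{E:jp} identifies this with $\rho_n(\g)\binom{z}{1}^n = \rho_n(\g)\wv_n(z)$. This yields $\wv_n\in\wh{M}^{n+1}_{-n}(\G,\rho_n)$.

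For $\ww_n$, I apply the same strategy. Starting from $(\ww_n\mid_{-n}\g)(z) = (cz+d)^n\binom{1}{-\g z}^n$, the analogous computation
\[ (cz+d)\binom{1}{-\g z} = \binom{cz+d}{-(az+b)} = \sm d & -c \\ -b & a\esm\binom{1}{-z} = ({^t}\g^{-1})\binom{1}{-z}, \]
together with the fact that ${^t}\g^{-1}\in SL(2,\bR)$, allows a second application of \eqref{E:jp} to yield $(\ww_n\mid_{-n}\g)(z) = \rho_n({^t}\g^{-1})\ww_n(z)$. It then remains to identify $\rho_n({^t}\g^{-1})$ with the contragredient $\rho_n^*(\g) = {^t}\rho_n(\g)^{-1}$, which expresses the self-duality of the symmetric tensor representation of $SL(2,\bR)$.

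The main obstacle is this last identification, since $\rho_n({^t}\g^{-1})$ is not literally equal to ${^t}\rho_n(\g)^{-1}$ in an arbitrary basis of $\bC^{n+1}$. The cleanest route I foresee is to invoke the $SL(2)$-invariant bilinear pairing on $\bC^{n+1}$ obtained from the determinant pairing $(v,w)\mapsto v_1 w_2 - v_2 w_1$ on $\bC^2$ by taking the $n$-th symmetric power; invariance of this pairing under $\rho_n$ produces precisely the intertwiner realizing the equality $\rho_n({^t}\g^{-1})\ww_n(z) = \rho_n^*(\g)\ww_n(z)$. Alternatively, one may argue entry-by-entry using the explicit formula for $\rho_n(\g)$ in the monomial basis and the fact that $\g\mapsto{^t}\g^{-1}$ is an involutive automorphism of $SL(2,\bR)$ conjugate to the identity by the Weyl element $\sm 0&-1\\1&0\esm$.
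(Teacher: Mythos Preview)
Your computations for $\wv_n$ and $\ww_n$ are exactly the paper's: the paper applies \eqref{E:jp} to get $\rho_n(\g)\wv_n(z)=\fJ(\g,z)^n\wv_n(\g z)$, and for $\ww_n$ it performs the same $2\times 2$ manipulation you wrote, arriving at $\rho_n({^t}\g^{-1})\ww_n(z)=\fJ(\g,z)^n\ww_n(\g z)$.

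The point you single out as ``the main obstacle'' is precisely where your proof and the paper diverge. The paper is silent on it: it simply opens the $\ww_n$ computation with the line
\[
\rho^*_n(\g)\ww_n(z)=\rho_n({^t}\g^{-1})\ww_n(z),
\]
treating $\rho^*_n(\g)=\rho_n({^t}\g^{-1})$ as given, and never returns to the issue. You are right to be suspicious: in the monomial basis used here one has $\rho_n({^t}\g^{-1})\neq{^t}\rho_n(\g)^{-1}$ already for $n=2$ (take $\g=\sm 1&1\\0&1\esm$ and compare the $(2,1)$ entries), so this step is not justified by the earlier definition $\rho^*(\g)={^t}\rho(\g)^{-1}$. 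However, your proposed repair does not close the gap either. The $SL(2)$-invariant pairing produces an intertwiner $B_n$ with $B_n\rho_n(\g)B_n^{-1}={^t}\rho_n(\g)^{-1}$, which shows only that $\rho_n\circ\bigl(\g\mapsto{^t}\g^{-1}\bigr)$ and $\rho^*_n$ are \emph{conjugate}; it cannot yield the vector identity $\rho_n({^t}\g^{-1})\ww_n(z)=\rho^*_n(\g)\ww_n(z)$ you assert, since these two vectors are genuinely different. What your argument actually proves is $\ww_n\in\wh{M}^{n+1}_{-n}\bigl(\G,\;\rho_n\circ({^t}(\cdot)^{-1})\bigr)$, together with an isomorphism of this space with $\wh{M}^{n+1}_{-n}(\G,\rho^*_n)$ via $B_n$. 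In effect the paper is tacitly adopting the convention $\rho^*_n:=\rho_n\circ({^t}(\cdot)^{-1})$; you have correctly detected the slippage, but you should phrase the conclusion as that identification (or as an isomorphism) rather than as an equality of vectors.
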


\begin{proof}
For  $\g = \sm a&b\\ c&d \esm \in SL(2,\bR)$, using \eqref{E:jp},
we see that
\begin{align} \label{E:x4}
\rho_n (\g) \wv_n (z) &= \rho_n (\g) \bi {z}1^n
= \biggl( \g \bi {z}1 \biggr)^n\\
&= \bi {az+b} {cz+d}^n =(cz+d)^n \bi {\g z}1 \notag\\
&= \fJ (\g,z)^{n} \wv_n (\g z). \notag
\end{align}
for all $z \in \mH$; hence we obtain $\wv_n \in \wh{M}^{n+1}_{-n}
(\G, \rho_n)$.  On the other hand, we have
\begin{align*}
\rho^*_n (\g) \ww_n (z) &= \rho_n ({^t}\g^{-1}) \ww_n (z) =
\biggl( {^t}\g^{-1} \bi 1{-z}
\biggr)^n \\
&= \biggl( \mm d& -c\\ -b&a \emm \bi 1{-z} \biggr)^n
= \bi {cz+d} {-(az+b)}^n\\
&= (cz+d)^n \bi 1{-(\g z)}^n\\
&= \fJ (\g,z)^n \ww_n (\g z),
\end{align*}
which show that $\ww_n \in \wh{M}^{n+1}_{-n} (\G, \rho^*_n)$.
\end{proof}

We now introduce the matrix-valued function
\[ L_n: \mH \to GL (n+1, \bC) \]
defined by
\[ L_n (z) = \rho_n \mm 1&z \\ 0&1  \emm \]
for all $z \in \mH$.  Then for $\g = \sm a&b \\ c&d \esm \in SL(2,
\bR)$ it is known (see p.$266$ in \cite{KS60}) that
\begin{equation} \label{E:pd}
L_n (\g z)^{-1} \rho_n (\g) L_n (z) = \rho_n \mm \fJ &0 \\ c& \fJ
\emm = \fJ^n A,
\end{equation}
where $A$ is the $(n+1) \times (n+1)$ lower triangular matrix
whose $(r+1)$-th row is of the form
\[ \Bigl( c^r \fJ^{n-r}, \sm r\\1 \esm c^{r-1} \fJ^{n-r -1}, \ldots,
\sm r\\r-1 \esm c \fJ^{n-2r +1}, \fJ^{n-2r}, 0, \ldots, 0 \Bigr) \]%
for $0 \leq r \leq n$ with $\fJ = \fJ (\g,z)$ in \eqref{E:mz}.

\begin{lem} \label{KS-2}
Let $\go \in \wh{M}^{n+1}_k (\G, \rho_n)$ be a vector-valued
modular form of the form
\begin{equation} \label{E:gs}
\go (z) = L_n (z) \cdot {^t}(f_0 (z), f_1 (z), \ldots, f_n (z))
\end{equation}
for all $z \in \mH$, where $f_0, \ldots, f_n \in \mF$.  If $r$ is an
integer with $0 \leq r \leq n$ such that $f_\ell \equiv 0$ for all $\ell <
r$, then $f_r$ is a modular form belonging to $M_{k -n+2r} (\G)$.
\end{lem}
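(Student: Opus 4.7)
My approach is to convert the vector-valued modularity of $\go$ into a scalar transformation law for each component $f_r$ by stripping off the factor $L_n$ via the conjugation identity \eqref{E:pd}.

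Starting from the modularity condition $\go(\g z) = \fJ(\g, z)^k \rho_n(\g) \go(z)$, I would substitute $\go(z) = L_n(z) \cdot {^t}(f_0(z), \ldots, f_n(z))$ and left-multiply by $L_n(\g z)^{-1}$ to arrive at
\[ {^t}(f_0(\g z), \ldots, f_n(\g z)) = \fJ(\g, z)^k \cdot \bigl[L_n(\g z)^{-1} \rho_n(\g) L_n(z)\bigr] \cdot {^t}(f_0(z), \ldots, f_n(z)). \]
The key point is the structure of the bracketed matrix furnished by \eqref{E:pd}: it is lower triangular, its $(r, s)$-entry vanishes for $s > r$, and its diagonal $(r, r)$-entry is the unique summand free of the parameter $c$, namely the pure power of $\fJ(\g, z)$ equal to $\fJ^{2r - n}$.

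Reading off the $r$-th row of the vector equation then gives $f_r(\g z) = \fJ^k \sum_{s \leq r} [\cdot]_{r, s}\, f_s(z)$. Under the hypothesis $f_\ell \equiv 0$ for every $\ell < r$, every off-diagonal term is killed and the sum collapses to the single diagonal contribution at $s = r$, yielding
\[ f_r(\g z) = \fJ(\g, z)^{k - n + 2r}\, f_r(z) \]
for all $\g \in \G$. Since $f_r$ is holomorphic on $\mH$ by assumption, this is precisely the statement $f_r \in M_{k - n + 2r}(\G)$.

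The argument is essentially bookkeeping, and the only delicate point is the correct reading of the diagonal exponent of $\fJ$ in the conjugated matrix supplied by \eqref{E:pd}; a miscount there would spoil the weight, and that single datum is what pins down the value $k - n + 2r$.
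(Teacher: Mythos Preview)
Your proposal is correct and follows essentially the same route as the paper: both arguments left-multiply the modularity relation by $L_n(\g z)^{-1}$, invoke the conjugation identity \eqref{E:pd} to obtain a lower-triangular system, and then use the hypothesis $f_\ell\equiv 0$ for $\ell<r$ so that only the diagonal entry survives in the $r$-th component. Your identification of that diagonal entry as $\fJ(\g,z)^{2r-n}$ is exactly what produces the weight $k-n+2r$, completing the proof.
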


\begin{proof}
Given $\go \in \wh{M}^{n+1}_k (\G, \rho_n)$ as in \eqref{E:gs} and
an element $\g \in \G$, using \eqref{E:pd} and the relation $\go
\mid_k \g = \rho (\g) \go$, we see that
\begin{align*}
\fJ (\g,z)^{-k} L_n (\g z) &\cdot {^t}(f_0 (\g z), f_1 (\g z),
\ldots, f_n (\g z))\\
&= \rho_n (\g) L_n (z) \cdot {^t}(f_0 (z), f_1 (z), \ldots, f_n
(z))\\
&= L_n (\g z) \fJ (\g,z)^n A \cdot {^t}(f_0 (z), f_1 (z), \ldots,
f_n (z));
\end{align*}
hence we have the identity
\begin{align*}
\fJ (\g,z)^{-k} &\cdot {^t}(f_0 (\g z), f_1 (\g z),
\ldots, f_n (\g z))\\
&= \fJ (\g,z)^n A \cdot {^t}(f_0 (z), f_1 (z), \ldots, f_n (z));
\end{align*}
of vectors in $\bC^{n+1}$.  Thus by comparing the $(r+1)$-th entries
we obtain
\[ \fJ (\g,z)^{-k} f_r (\g z) =  \fJ (\g,z)^{n-2r} f_r(z) ,\]
and therefore the lemma follows.
\end{proof}

If $\gl, w, r \in \bZ$, we set
\begin{equation} \label{E:j9}
\ga^\gl_{w,r} = (-1)^r \bi {\gl +w -1} {w -r} \bi {w -n -1} {r} .
\end{equation}
Then from \eqref{E:he} we see that
\begin{equation} \label{E:j4}
[f, \wv_n]_{w}^{(\gl, -n)} = \sum_{r=0}^{w}
\alpha^\gl_{w, r} f^{(r)} \wv_n^{(w-r)}
\end{equation}
for $f \in M_{\gl} (\G)$.

\begin{lem}\label{KS-1}
If $g_j\in M_{k-n+2j}(\G)$ with $0\leq j\leq 2n$, we have
\[ [g_j(z), \wv_n (z)]_{n-j}^{(k-n+2j, -n)} = L_n (z) \cdot
{^t} (g^L_0 (z), g^L_1 (z), \ldots, g^L_n (z)) \]%
for all $z \in \mH$, where
\[ g^L_\ell = \begin{cases}
0 & \text{for $0 \leq \ell \leq j-1$;}\\
(n-\ell)! \alpha^{k-n+2j}_{n-j, \ell-j} D^{\ell -j} g_j & \text{for $j
\leq \ell \leq n$}
\end{cases}\]%
with $D =d/dz$ and $\ga_{k -n +2j, \ell -j}$ is as in \eqref{E:j9}.
\end{lem}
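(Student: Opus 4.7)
The plan is to work directly with the formula \eqref{E:j4} for the Rankin-Cohen bracket and match components on both sides. The key reduction is the identity
\[ \wv_n^{(s)}(z) = s! \, L_n(z) \, e_{n-s+1} \quad (0 \leq s \leq n), \]
where $e_1, \ldots, e_{n+1}$ denotes the standard basis of $\bC^{n+1}$. Once this identity is in hand, the lemma becomes a routine reindexing.

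To establish the identity, first I would compute $L_n(z)$ explicitly from \eqref{E:jp}: expanding $(z_1 + z z_2)^{n-m} z_2^m = \sum_{k \geq 0} \binom{n-m}{k} z^k z_1^{n-m-k} z_2^{m+k}$ and collecting terms shows that $L_n(z)$ is upper triangular with entries
\[ L_n(z)_{m+1, \ell+1} = \binom{n-m}{\ell-m} z^{\ell-m} \quad (0 \leq m \leq \ell \leq n). \]
On the other hand, the $(m+1)$-st entry of $\wv_n^{(s)}(z)$ is $D^s(z^{n-m}) = \frac{(n-m)!}{(n-m-s)!} z^{n-m-s}$ (and zero when $s > n-m$). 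Comparing with $s! \binom{n-m}{s} z^{n-m-s} = s! \cdot L_n(z)_{m+1, n-s+1}$ yields the identity component-wise.

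Next I would substitute into the defining formula \eqref{E:j4}. With $w = n-j$, we obtain
\[ [g_j, \wv_n]_{n-j}^{(k-n+2j,-n)}(z) = \sum_{r=0}^{n-j} \alpha_{n-j, r}^{k-n+2j} \, g_j^{(r)}(z) \, (n-j-r)! \, L_n(z) \, e_{j+r+1} . \]
Since $L_n(z)$ is independent of $r$, I can pull it out of the sum, substitute $\ell = r + j$ (so $\ell$ runs from $j$ to $n$, $g_j^{(r)} = D^{\ell-j} g_j$, and $(n-j-r)! = (n-\ell)!$), and read off the coefficient of $e_{\ell+1}$ as $(n-\ell)! \, \alpha_{n-j,\ell-j}^{k-n+2j} \, D^{\ell-j} g_j = g_\ell^L(z)$. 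The coefficients of $e_1, \ldots, e_j$ are absent, which matches $g_\ell^L \equiv 0$ for $\ell < j$.

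The only nontrivial part of the argument is the component-wise verification of the identity $\wv_n^{(s)} = s! \, L_n e_{n-s+1}$; conceptually this reflects the fact that $L_n(z) = \exp(z N)$ for the nilpotent $N = d\rho_n\sm 0&1\\0&0\esm$ with $\wv_n = L_n e_{n+1}$, but for a clean proof a direct check of matrix entries suffices. Everything else is bookkeeping with the indices $r$ and $\ell$ and the factorial identity $\binom{n-m}{s}(n-m-s)! \cdot s! = (n-m)!$.
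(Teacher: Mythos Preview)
Your proof is correct and follows essentially the same approach as the paper: your key identity $\wv_n^{(s)}(z) = s!\, L_n(z)\, e_{n-s+1}$ is precisely the column-by-column form of the paper's matrix identity $(D^n \wv_n, D^{n-1}\wv_n, \ldots, \wv_n) = L_n(z)\cdot \operatorname{diag}[n!,(n-1)!,\ldots,1]$, and the subsequent reindexing $\ell = r+j$ matches the paper's computation. The only difference is that you work forward from the bracket to the $L_n$-expression (and spell out the entry-wise verification of $L_n(z)$), whereas the paper works backward from the $L_n$-expression, but the substance is identical.
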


\begin{proof}
A direct computation shows that
\[ (D^n \wv_n, D^{n-1} \wv_n, \ldots, \wv_n) = L_n (z) \cdot {\rm
diag} \, [n!, (n-1)!, \ldots, 1] ,\]%
where ${\rm diag}\, [\cdots]$ denotes the $(n+1) \times (n+1)$
diagonal matrix having $[\cdots]$ as its diagonal entries.  Thus
we see that
\begin{align*}
L_n (z) \cdot {^t} (g^L_0 (z)&, g^L_1 (z), \ldots, g^L_n (z))\\
& = (D^n \wv_n, D^{n-1} \wv_n, \ldots, \wv_n) \cdot
{\rm diag} \, [n!, (n-1)!, \ldots, 1]^{-1}\\
&\hspace{.8in} \times  {^t} (g^L_0 (z), g^L_1 (z), \ldots, g^L_n
(z))\\
&= \sum^n_{i=0} \frac 1{(n-i)!} (D^{n-i} \wv_n (z)) g_i^L (z)\\
&= \sum^n_{i=j} \alpha^{k-n+2j}_{n-j, i-j} (D^{n-i}
\wv_n (z)) (D^{i-j} g_j z)\\
&= \sum_{\ell=0}^{n-j} \alpha^{k-n+2j}_{n-j, \ell} (D^{n-\ell-j} \wv_n (z))
(D^{\ell} g_j(z))
\end{align*}
for all $z \in \mH$.  On the other hand, from \eqref{E:j4} we see that
\[ [g_j, \wv_n]_{n-j}^{(k-n+2j,-n)} = \sum_{\ell=0}^{n-j}
\alpha^{k-n+2j}_{n-j, \ell} (D^{\ell}g_j) (D^{n-j-\ell} \wv_n)  ;\]
hence the lemma follows.
\end{proof}

\bigskip

\section{\bf{Quasimodular and modular polynomials}} \label{S:qmp}

In this section we describe a correspondence between quasimodular
and modular polynomials as well as the identification of
quasimodular forms with quasimodular polynomials.  By combining
these isomorphisms with the main theorems stated in Section
\ref{S:in} we establish a correspondence between vector-valued
modular forms and quasimodular forms.

We fix a nonnegative integer $m$, and denote by $\mF_m [X]$ the space of
polynomials over the ring $\mF$ of holomorphic functions on $\mH$ of degree
at most $m$.  Given $\gl \in \bZ$ and a polynomial
\[
\Phi (z,X) = \sum^m_{r=0} \phi_r (z) X^r \in \mF_m [X] ,
\]
we set
\begin{equation} \label{E:us}
(\Phi \mid^X_\gl \g) (z, X) = \sum^m_{r=0} (\phi_r \mid_{\gl +2r}
\g) (z) X^r ,
\end{equation}
\[
(\Phi \md_\gl \g) (z, X) = \fJ (\g, z)^{-\gl} \Phi (\g z, \fJ (\g,
z)^2 ( X - \fK (\g, z)))
\]
for all $z \in \mH$ and $\g \in SL (2, \bR)$.   Then it can be shown that
the two operations $\mid^X_\gl$ and $\md_\gl$ determine right actions
of $SL(2, \bR)$ on $\mF_m [X]$.

\begin{dfn} \label{D:ns}
(i) An element $F (z,X) \in \mF_m [X]$ is a {\em modular
polynomial for
  $\G$ of weight $\gl$ and degree at most $m$\/} if it satisfies
\[
F \mid^X_\gl \g = F
\]
for all $\g \in \G$.

(ii) An element $\Phi (z,X) \in \mF_m [X]$ is a {\em quasimodular
polynomial for $\G$ of weight $\gl$ and degree at most $m$\/} if
it satisfies
\[
\Phi \md_\gl \g = \Phi
\]
for all $\g \in \G$.

(iii) An element $f \in \mF$ is a {\em quasimodular form for $\G$
of weight $\gl$ and depth at most $m$\/} if there are functions
$f_0, \ldots, f_m \in \mF$ such that
\begin{equation} \label{E:qq1}
(f \mid_\gl \g) (z) = \sum^m_{r=0} f_r (z) \fK (\g, z)^r
\end{equation}
for all $z \in \mH$ and $\g \in \G$.
\end{dfn}

We denote by $MP^m_\gl (\G)$ and $QP^m_\gl (\G)$ the spaces of
modular and quasimodular, respectively, polynomials for $\G$ of
weight $\gl$ and degree at most $m$.  We also denote by $QM^m_\gl
(\G)$ the space of quasimodular forms for $\G$ of weight $\gl$ and
depth at most $m$.

\begin{rem} \label{R:8n}
(i) It follows from \eqref{E:us} and Definition \ref{D:ns}(i) that
a polynomial $F(z,X)=\sum_{r=0}^m f_r (z) X^r$ is a modular polynomial
belonging to $MP_{\gl}^m (\G)$ if and only if
\[ f_r(z)\in M_{\gl +2r} (\G) \]
for each $r \in \{0, 1, \ldots, m\}$.

(ii) If $\g \in \G$ is the identity matrix in \eqref{E:qq1}, then
$\fK (\g, z) =0$.  Thus, if $f \in QM^m_\gl (\G)$ satisfies \eqref{E:qq1},
we see that
\begin{equation} \label{E:f0}
f = f_0 .
\end{equation}
On the other hand, if $m=0$, the relation \eqref{E:qq1} can be
written in the form
\[ f \mid_\gl \g = f_0 = f ;\]
hence $QM^0_\gl (\G)$ coincides with the space $M_\gl (\G)$
of modular forms of weight $\gl$ for $\G$.

(iii) If \eqref{E:qq1} is satisfied for another set of functions
$\wh{f}_0, \ldots, \wh{f}_m \in \mF$, then we have
\[ \sum^m_{r=0} (\wh{f}_r (z) -f_r (z)) \fK (\g, z)^r = 0 \]
for all $\g$ belonging to the infinite set $\G$; hence it follows
that $\wh{f}_r =f_r$ for each $r$.  Thus we see that the
quasimodular form $f$ determines the associated functions $f_0,
\ldots, f_m \in \mF$ uniquely.
\end{rem}

Given $\gl, m \in \bZ$ with $\gl > 2m \geq 0$ and a polynomial $F
(z, X) = \sum^m_{r=0} f_r (z) X^r \in \mF_m [X]$, we set
\begin{equation} \label{E:xw}
(\gL^m_\gl F) (z, X) = \sum^m_{r=0} f^\gL_r (z) X^r,
\end{equation}
\begin{equation} \label{E:j7}
(\Xi^m_\gl F) (z, X) = \sum^m_{r=0} f^\Xi_r (z) X^r,
\end{equation}
where
\[ f^\gL_k = \frac 1{k!} \sum^{m-k}_{r=0} \frac {1} {r! (\gl
-2k -r-1)!} f^{(r)}_{m -k -r} ,\]
\[ f^\Xi_k = (\gl + 2k -2m -1) \sum^{k}_{r=0} \frac {(-1)^r} {r!}
(m-k +r)! (2k +\gl -2m -r-2)! f^{(r)}_{m-k +r} \]%
for $0 \leq k \leq m$.

\begin{thm}\label{MtQ}
The complex linear endomorphisms $\gL^m_\gl$ and $\Xi^m_\gl$ of
$\mF_m [X]$ defined by \eqref{E:xw} and \eqref{E:j7} induce
isomorphisms
\begin{equation} \label{E:nw}
\gL^m_\gl: MP^m_{\gl-2m} (\G) \to QP^m_\gl (\G), \quad \Xi^m_\gl:
QP^m_\gl (\G) \to MP^m_{\gl-2m} (\G)
\end{equation}
with
\[ (\Xi^m_\gl)^{-1} = \gL^m_\gl \]
for each $\gl > 2m$.
\end{thm}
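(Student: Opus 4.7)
The plan is to establish the two isomorphisms in \eqref{E:nw} in three stages. First I would show that $\gL^m_\gl$ intertwines the action $\mid^X_{\gl-2m}$ with $\md_\gl$, and that $\Xi^m_\gl$ intertwines $\md_\gl$ with $\mid^X_{\gl-2m}$; this ensures that each map respects the relevant subspaces. Second, I would verify the algebraic identity
\[
\Xi^m_\gl \circ \gL^m_\gl = \mathrm{id}_{\mF_m [X]}
\]
by a direct polynomial computation. Combining these two facts then produces the mutually inverse isomorphisms.

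For the first stage, given $F = \sum_r f_r X^r \in \mF_m [X]$ and $\g \in \G$, I would expand both $\gL^m_\gl (F \mid^X_{\gl-2m} \g)$ and $(\gL^m_\gl F) \md_\gl \g$ in powers of $X$ and compare coefficients. In the latter expression, the factor $(X - \fK(\g,z))^r$ coming from the definition of $\md_\gl$ produces $\fK$-dependent contributions; after using the chain rule to expand $f_r^{(s)}(\g z)$ in terms of derivatives of $f_r$ and powers of $\fK$, the weighted sum produced by the coefficients $1/(\gl-2k-r-1)!$ in the definition of $f_k^\gL$ is precisely what is needed, via a Chu-Vandermonde-type identity, to reproduce the coefficient of $X^k$ in $\gL^m_\gl (F \mid^X_{\gl-2m} \g)$. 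The hypothesis $\gl > 2m$ guarantees that the factorials $(\gl -2k -r -1)!$ are all well defined. An analogous computation, in which the alternating signs in $f_k^\Xi$ play the dual role, shows that $\Xi^m_\gl$ intertwines $\md_\gl$ with $\mid^X_{\gl-2m}$.

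For the second stage, I would substitute the formula for $\gL^m_\gl F$ into the definition of $\Xi^m_\gl$ and collect the coefficient of $X^k$. Reindexing by $t = r + s$, where $s$ is the derivative order from $\Xi^m_\gl$ and $r$ is the derivative order from $\gL^m_\gl$, rewrites this coefficient in the shape $\sum_t f_{k-t}^{(t)} \, S_t$, where $S_t$ depends only on $t$, $k$, $\gl$, $m$. The inner sum $S_t$ is a classical alternating binomial sum that vanishes for $t > 0$, while $S_0$ consists of a single term; the prefactor $(\gl + 2k - 2m -1)$ in the definition of $f_k^\Xi$ is exactly what is required to cancel a reciprocal factorial and force $S_0 = 1$, so that the whole expression collapses to $f_k$.

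Once $\Xi^m_\gl \circ \gL^m_\gl = \mathrm{id}$ is established on all of $\mF_m[X]$, the equivariance proved in the first stage ensures that $\gL^m_\gl$ sends $MP^m_{\gl-2m}(\G)$ into $QP^m_\gl(\G)$ and that $\Xi^m_\gl$ sends $QP^m_\gl(\G)$ into $MP^m_{\gl-2m}(\G)$; the two restrictions must therefore be mutually inverse bijections, giving the theorem. The main obstacle is the bookkeeping in the second stage: the shifts $2k, -2m, -r-1$ in the factorials together with the alternating signs must be organized so that the reindexing by $t = r+s$ cleanly separates an elementary alternating binomial sum from the derivative data, and locating the right change of summation variable is where the care is required. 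By contrast, the equivariance computations in the first stage are routine applications of the chain rule once the cocycle identities for $\fJ$ and $\fK$ in \eqref{E:mz} are in hand.
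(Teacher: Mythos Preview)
The paper does not prove this theorem; it simply cites \cite{L09b}. Your outline is essentially the direct argument one would carry out to establish the result, so there is no meaningful comparison of approaches to make against the present paper.

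There is, however, a genuine gap in your third paragraph. You only verify $\Xi^m_\gl \circ \gL^m_\gl = \mathrm{id}$ on $\mF_m[X]$ and then assert that the restrictions to $MP^m_{\gl-2m}(\G)$ and $QP^m_\gl(\G)$ are mutually inverse bijections. But $\mF_m[X]$ is infinite-dimensional over $\bC$ (the coefficients range over all of $\mF$), so a one-sided inverse need not be two-sided: from $\Xi^m_\gl \circ \gL^m_\gl = \mathrm{id}$ you get only that $\gL^m_\gl$ is injective and $\Xi^m_\gl$ is surjective on $\mF_m[X]$. Nothing you have written rules out the possibility that some element of $QP^m_\gl(\G)$ lies outside the image of $\gL^m_\gl$, and your sentence ``the two restrictions must therefore be mutually inverse bijections'' does not follow from what precedes it.

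The repair is short. In the formula for $f^\gL_k$ the $r=0$ term is a nonzero constant times $f_{m-k}$ with no derivative, and every other term involves $f_{m-k-r}$ with $r \geq 1$. Hence $\gL^m_\gl$, viewed in the reversed ordering of coefficients, is a triangular system of differential operators with nonzero constants on the diagonal; one can solve it recursively, so $\gL^m_\gl$ is bijective on $\mF_m[X]$. Combined with $\Xi^m_\gl \circ \gL^m_\gl = \mathrm{id}$ this forces $\Xi^m_\gl = (\gL^m_\gl)^{-1}$, and both compositions are the identity. Alternatively, you can run the same alternating-sum computation for $\gL^m_\gl \circ \Xi^m_\gl$; the bookkeeping is symmetric to what you already described.
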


\begin{proof}
See \cite{L09b}.
\end{proof}

If $f(z) \in QM^m_\gl (\G)$ is a quasimodular form satisfying
\eqref{E:qq1}, we define the corresponding polynomial $(\mQ_\gl^m
f) (z,X) \in \mF_m [X]$ by
\begin{equation} \label{E:tp}
(\mQ_\gl^m f) (z,X) = \sum^m_{r=0} f_r (z) X^r
\end{equation}
for all $z \in \mH$, which is well-defined by Remark
\ref{R:8n}(iii).  Then it is known (see \cite{L09b}) that the resulting
complex linear map
\begin{equation} \label{E:sp}
\mQ_\gl^m: QM^m_\gl (\G) \to QP^m_{\gl} (\G)
\end{equation}
is an isomorphism whose inverse is given by
\[ (\mQ_\gl^m)^{-1} F (z,X) = F (z,0) \]
for $F(z,X) \in QP^m_{\gl} (\G)$.

\bigskip

\section{\bf{Weak Jacobi forms}}

Weak Jacobi forms generalize usual Jacobi forms (cf.\
\cite{EZ85}), and they appear, for example, in the theory of
elliptic genera and quantum field theory (see e.g.\ \cite{Wi87}).
In this section we describe connections between vector-valued
modular forms and weak Jacobi forms for the modular group $\G (1)
= SL(2, \bZ)$.

\begin{dfn}
Given integers $k$ and $\ell$, a holomorphic function $\phi: \mH
\times \bC \to \bC$ is a {\em weak modular form of weight $k$ and
index $\ell$ for $\G(1)$\/} if it satisfies
\[ \phi (\g z, \fJ (\g,z)^{-1} w) = \fJ (\g,z)^k e^{2 \pi i \ell
\fK (\g,z) w^2} \phi (z, w) ,\]%
\[ \phi (z, w + \mu z + \nu) = (-1)^{2\ell (\mu +\nu)} e^{-2
\pi i (\mu^2 z +2\mu w)} \phi (z, w) \]%
for all $(z,w) \in \mH \times \bC$, $\g \in \G$ and $\mu, \nu \in
\bZ$, and it has a Fourier expansion of the form
\[ \phi (z, \gz) = \sum_{m \geq 0} \sum_{r \in \ell + \bZ} c(m,r)
q^m {\gz}^r \]%
with $q = e^{2\pi i z}$ and $\gz = e^{2\pi i w}$.  We denote by
$\wt{J}_{k,\ell} (\G(1))$ the space of weak modular forms of
weight $k$ and index $\ell$ for $\G(1)$.
\end{dfn}

In \cite{EZ85} Eichler and Zagier discussed various properties of
the two Jacobi forms
\[ \phi_{10,1}(z,w) = \frac 1{144} (E_6(z) E_{4,1}(z,w) - E_4(z)
E_{6,1}(z,w)), \]
\[ \phi_{12,1}(z,w) = \frac 1{144} (E_4^2(z) E_{4,1}(z,w) - E_6(z)
E_{6,1}(z,w)) \]
of index 1 of weight 10 and 12.  Here $E_4, E_6$ are the
usual elliptic Eisenstein series given by
\[ E_4(z):=1+240\sum_{n\geq 1}\sigma_{3}(n)q^n , \quad
E_6(z):=1-504\sum_{n\geq 1} \sigma_5(n)q^n \]
for $z \in \mH$ (see \cite{Za94}), and $E_{4,1}, E_{6,1}$ are Jacobi
Eisenstein series of weights $4$ and $6$, respectively, with index $1$ of
the form
\[ E_{4,1} (z,w) =1+(\gz+56\gz+126+56\gz^{-1}+\gz^{-2})q+(126\gz^2+..+126
\ga^{-2})q^2+ \cdots ,\]
\[ E_{6,1} (z,w)  =1 +(\gz^2-88\gz-330-88\gz^{-1} +\gz^{-2})q+(-330\gz^2-
\cdots -330\gz^{-2}) q+ \cdots \]
for $(z,w) \in \mH \times \bC$ (cf.\ \cite[p.~23]{EZ85}).  If we set
\begin{equation} \label{E:ms}
M_* = M_* (\G (1)) = \bigoplus_{k\geq 0} M_k(\G(1)) \cong \bC [E_4,
E_6] ,
\end{equation}
the functions
\[ \wt{\phi}_{-2,1} (z,w) =\frac{\phi_{10,1} (z,w)} {\gD (z)}, \quad
\wt{\phi}_{0,1} =\frac{\phi_{12,1} (z,w)}{\gD (z)} \]
with $\gD (z)$ as in \eqref{E:j4} satisfy the following property.

\begin{thm} The ring $\tilde{J}_{ev, *}(\Gamma(1))$  of weak Jacobi
forms of even weight is a polynomial algebra over the ring $M_{*}$ on two generators $\wt{\phi}_{-2,1}$ and
$\wt{\phi}_{0,1}$.
\end{thm}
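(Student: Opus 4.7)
The plan divides into three parts: verify that $\wt\phi_{-2,1}$ and $\wt\phi_{0,1}$ really are weak Jacobi forms (Part A), show that they generate $\wt J_{ev,*}(\G(1))$ over $M_*$ (Part B), and establish their algebraic independence (Part C).

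Part A is mechanical. The functions $\phi_{10,1}$ and $\phi_{12,1}$ are Jacobi cusp forms of index $1$ and weights $10$ and $12$, so they vanish as $q \to 0$. Since $\gD$ is holomorphic and nowhere vanishing on $\mH$, independent of $w$, and vanishes to order $1$ in $q$, the quotients $\wt\phi_{-2,1}$ and $\wt\phi_{0,1}$ inherit the Jacobi transformation law with weights $-2$ and $0$ (and index $1$), while their Fourier expansions remain supported on $m \geq 0$. Combined with the multiplicativity $\wt J_{k_1, m_1}(\G(1)) \cdot \wt J_{k_2, m_2}(\G(1)) \subseteq \wt J_{k_1 + k_2, m_1 + m_2}(\G(1))$ and the embedding $M_* = \bigoplus_k \wt J_{k, 0}(\G(1))$, this yields $M_*[\wt\phi_{-2,1}, \wt\phi_{0,1}] \subseteq \wt J_{ev, *}(\G(1))$.

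For Part B, I would induct on the index $m$. The case $m=0$ is the identification $\bigoplus_k \wt J_{k,0}(\G(1)) = M_*$. The crucial base case is $m=1$, where I would construct a developing map $D_1\colon \wt J_{k,1}(\G(1)) \to M_k(\G(1)) \oplus M_{k+2}(\G(1))$ via the Taylor expansion at $w = 0$. Because $k$ is even, $\phi(z, w) = \chi_0(z) + \chi_2(z) w^2 + O(w^4)$; the coefficient $\chi_0$ lies directly in $M_k$, and a heat-operator correction of the quasimodular $\chi_2$ produces $\wt\chi_2 \in M_{k+2}$. Injectivity of $D_1$ reduces to showing that the higher Taylor coefficients $\chi_{2j}$, $j \geq 2$, are forced by $\chi_0$ and $\wt\chi_2$ through the combined modular and lattice transformation laws at index $1$; surjectivity follows from the explicit computations $\wt\phi_{0,1}(z,0) = 12$ (using $E_{4,1}(z,0) = E_4$, $E_{6,1}(z,0) = E_6$, and $E_4^3 - E_6^2 = 1728\,\gD$) and $\wt\phi_{-2,1}(z,0) = 0$, together with the fact that $\wt\phi_{-2,1}$ has a nonzero $w^2$-coefficient, so the images form an $M_*$-basis of the target. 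For the inductive step $m > 1$: subtract a suitable $M_*$-multiple of $\wt\phi_{0,1}^m$ to kill $\phi(z, 0)$, then express the residue as $\wt\phi_{-2,1}$ times an element of $\wt J_{k+2,m-1}(\G(1))$ and apply the induction hypothesis.

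For Part C, suppose $\sum_{a + b = m} g_{a,b}\,\wt\phi_{-2,1}^{\,a}\,\wt\phi_{0,1}^{\,b} \equiv 0$ with $g_{a,b} \in M_{k + 2a}(\G(1))$. Evaluating at $w = 0$ and using $\wt\phi_{-2,1}(z, 0) = 0$ and $\wt\phi_{0,1}(z, 0) = 12$ forces $g_{0,m} = 0$. Successive application of $\partial_w^{2j}|_{w=0}$ for $j = 1, 2, \ldots$, expanded via the Leibniz rule and the explicit low-order Taylor data of both generators, peels off $g_{1,m-1}, g_{2,m-2}, \ldots$ one at a time. The main obstacle is the analytic core of Part B: proving that a weak Jacobi form of index $1$ with vanishing first two Taylor coefficients at $w = 0$ vanishes identically, and the corresponding divisibility statement needed to close the induction from $m$ to $m-1$. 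Both rely on carefully exploiting the interaction between the $SL(2, \bZ)$ modular transformation and the lattice action $w \mapsto w + \mu z + \nu$ on weak Jacobi forms of index $1$.
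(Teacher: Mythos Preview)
The paper does not give its own proof of this theorem: its entire proof is the citation ``See \cite{EZ85}.'' So there is nothing in the paper to compare your argument against line by line; you are reconstructing the Eichler--Zagier proof rather than reproducing the paper's.

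That said, your outline is essentially the standard Eichler--Zagier argument and is on the right track. Part~A is fine, and your computation $\wt\phi_{0,1}(z,0)=12$, $\wt\phi_{-2,1}(z,0)=0$ is correct. In Part~B your induction scheme is the right one, and you have honestly flagged the two genuine analytic inputs: (1) that an even weak Jacobi form of index~$1$ is determined by its first two Taylor coefficients at $w=0$ (injectivity of $D_1$), and (2) the divisibility statement that any weak Jacobi form vanishing at $w=0$ factors as $\wt\phi_{-2,1}$ times a weak Jacobi form of index one less. As you say, both rest on the interplay between the modular and elliptic transformation laws; in Eichler--Zagier these are handled by identifying $\wt\phi_{-2,1}$ with (a constant times) $\theta(z,w)^2/\eta(z)^6$, whose zero divisor on the torus is exactly $2\cdot[0]$, so that vanishing at $w=0$ forces divisibility. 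Your sketch would be complete once you supply that identification (or an equivalent zero-divisor argument) and carry out the Taylor-coefficient bookkeeping. Part~C is then automatic from the induction in Part~B, since a nontrivial relation of index $m$ would contradict the freeness just established; your direct differentiation argument also works but is not needed separately.
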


\begin{proof}
See \cite{EZ85}.
\end{proof}

The same functions can be used to obtain a correspondence between vector-valued modular forms and weak modular forms as is stated below.

\begin{thm}\label{weak1}
The map $\Psi_{n,k} : \wh{M}_{k}^{n+1} (\G(1), \rho_n) \to
\wt{J}_{k-n,n} (\G(1))$ defined by
\begin{equation} \label{E:y4}
 \Psi_{n,k} (F)=\sum_{\ell=0}^n \mmV_{k, n, \ell}^{-1}(F)
\cdot \wt{\phi}_{-2,1}^{\ell} \cdot \wt{ \phi}_{0,1}^{n-\ell}
\end{equation}
for any $F \in \wh{M}_{k}^{n+1} (\G(1), \rho_n)$ is an
isomorphism. Here, $\mmV_{k, n, \ell}^{-1}$ is the inverse of the
isomorphism in (\ref{E:wa}).
\end{thm}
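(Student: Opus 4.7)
The plan is to combine the direct sum decomposition of Theorem \ref{main} with the structure theorem for $\tilde{J}_{ev,*}(\G(1))$ recalled just above the statement. First I will unpack the symbol $\mmV_{k,n,\ell}^{-1}(F)$ appearing in \eqref{E:y4}. By Theorem \ref{main}(ii) every $F \in \wh{M}_k^{n+1}(\G(1),\rho_n)$ admits a unique decomposition $F = \sum_{\ell=0}^n F_\ell$ with $F_\ell \in \wh{\mG}_{k-n+2\ell}^{n+1}(\G(1),\rho_n) = \operatorname{Im}(\mmV_{k,n,\ell})$, and by Theorem \ref{main}(iii) each $F_\ell$ is the image under $\mmV_{k,n,\ell}$ of a unique $g_\ell \in M_{k-n+2\ell}(\G(1))$. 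Setting $\mmV_{k,n,\ell}^{-1}(F):=g_\ell$ therefore gives a well-defined complex linear map into $M_{k-n+2\ell}(\G(1))$.

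Next I verify that $\Psi_{n,k}$ lands in $\wt{J}_{k-n,n}(\G(1))$: the monomial $\wt{\phi}_{-2,1}^{\ell}\wt{\phi}_{0,1}^{n-\ell}$ is a weak Jacobi form of weight $-2\ell$ and index $\ell+(n-\ell)=n$, so multiplication by $g_\ell \in M_{k-n+2\ell}(\G(1))$ produces a weak Jacobi form of weight $k-n$ and index $n$, and summing over $\ell$ yields an element of $\wt{J}_{k-n,n}(\G(1))$. Here one tacitly assumes $k-n$ even so that the target lies in the even-weight ring covered by the cited structure theorem.

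For bijectivity I will invoke the fact that $\tilde{J}_{ev,*}(\G(1))$ is a polynomial algebra over $M_*$ in the two algebraically independent generators $\wt{\phi}_{-2,1}$ and $\wt{\phi}_{0,1}$. For injectivity, if $\Psi_{n,k}(F)=0$ then $\sum_{\ell=0}^n g_\ell \wt{\phi}_{-2,1}^\ell \wt{\phi}_{0,1}^{n-\ell}=0$, and algebraic independence over $M_*$ forces every $g_\ell=0$, hence every $F_\ell = \mmV_{k,n,\ell}(g_\ell)=0$ and $F=0$. For surjectivity, given $\psi\in\wt{J}_{k-n,n}(\G(1))$ I write it uniquely as a polynomial in the two generators with coefficients in $M_*$; matching the index $n$ forces each occurring monomial to have total degree exactly $n$, and matching the weight $k-n$ forces the coefficient of $\wt{\phi}_{-2,1}^\ell \wt{\phi}_{0,1}^{n-\ell}$ to lie in $M_{k-n+2\ell}(\G(1))$. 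Calling that coefficient $g_\ell$, the element $F:=\sum_{\ell=0}^n \mmV_{k,n,\ell}(g_\ell)\in \wh{M}_k^{n+1}(\G(1),\rho_n)$ satisfies $\Psi_{n,k}(F)=\psi$.

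The main obstacle is not a deep idea but rather the bookkeeping that ties the weight shift $+2\ell$ in the image $\wh{\mG}_{k-n+2\ell}^{n+1}(\G(1),\rho_n)$ to the weight shift $-2\ell$ of the monomial $\wt{\phi}_{-2,1}^\ell \wt{\phi}_{0,1}^{n-\ell}$, which is precisely what makes the weights cancel to a constant $k-n$ across all $\ell$; once this matching is noted, Theorem \ref{main} identifies the source with $\bigoplus_{\ell=0}^n M_{k-n+2\ell}(\G(1))$, the structure theorem identifies the target with the same direct sum via the basis of monomials in $\wt{\phi}_{-2,1}, \wt{\phi}_{0,1}$ of total degree $n$, and $\Psi_{n,k}$ becomes the evident identification between these two descriptions.
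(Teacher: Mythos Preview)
Your proposal is correct and follows essentially the same approach as the paper: both factor $\Psi_{n,k}$ as the composition of the isomorphism $\wh{M}_k^{n+1}(\G(1),\rho_n)\cong\bigoplus_{\ell=0}^n M_{k-n+2\ell}(\G(1))$ from Theorem~\ref{main} with the Eichler--Zagier isomorphism $(f_0,\ldots,f_n)\mapsto\sum_\ell f_\ell\,\wt{\phi}_{-2,1}^{\ell}\wt{\phi}_{0,1}^{n-\ell}$ onto $\wt{J}_{k-n,n}(\G(1))$. The paper simply cites \cite[p.~108]{EZ85} for the latter, whereas you spell out injectivity and surjectivity directly from the polynomial-algebra structure of $\tilde{J}_{ev,*}(\G(1))$; your added remark on the parity of $k-n$ is a reasonable caveat that the paper leaves implicit.
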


\begin{proof}
It is known that the map
\[ P^n_{k-n}: \bigoplus^n_{\ell =0} M_{k-n +2\ell} (\G (1)) \to
\wt{J}_{k-n,n} (\G(1)) \]%
given by
\[ P^n_{k-n} (f_0, f_1, \ldots, f_n) = \sum^n_{\ell =0} f_\ell\;
\wt{\phi}_{-2,1}^{\ell} \wt{\phi}_{0,1}^{n -\ell} \] with $f_\ell
\in M_{k-n+2\ell} (\G (1))$ for $0 \leq \ell \leq n$ is an
isomorphism (see \cite[p.108]{EZ85}).  Thus the theorem follows
from this and the isomorphism \eqref{E:m} in Theorem \ref{main}.
\end{proof}


The above results suggest the existence of certain isomorphisms included in the next theorem.

\begin{thm}
There are canonical isomorphisms of the form
\begin{align*}
\bigoplus_{k\equiv n \, ({\rm mod} \, 2)} \wh M_{k}^{n+1}(\G
(1), \rho_n) &\cong \bigoplus_{k\equiv n \, ({\rm mod} \, 2)} \bigoplus_{\ell=0}^n M_{k-n+2\ell}(\G(1))\\
&\cong \bigoplus_{k\equiv n \, ({\rm mod} \, 2)}
\tilde{J}_{k-n,n}(\G(1)) \cong M_*[\wt{\phi}_{0,1},
\wt{\phi}_{2,1} ] ,
\end{align*}
where $M^*$ is as in \eqref{E:ms}.
\end{thm}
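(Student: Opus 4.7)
The plan is to obtain the claimed chain of isomorphisms by concatenating results already established in the paper: Theorem~\ref{main} for the first link, Theorem~\ref{weak1} for the second link, and the Eichler--Zagier structure theorem (stated just before Theorem~\ref{weak1}) for the third link.

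For the first isomorphism, Theorem~\ref{main}(iii) provides, for every fixed $k$, a canonical isomorphism $\wh M_k^{n+1}(\G(1),\rho_n) \cong \bigoplus_{\ell=0}^{n} M_{k-n+2\ell}(\G(1))$. Taking the direct sum over all $k\equiv n\pmod{2}$ immediately gives the first link in the chain. The parity restriction is automatic: $M_j(\G(1))=0$ whenever $j$ is odd, so only those $k$ with $k-n$ even contribute nontrivially on either side.

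For the second isomorphism, Theorem~\ref{weak1} exhibits, for each such $k$, an explicit isomorphism $\Psi_{n,k}:\wh M_k^{n+1}(\G(1),\rho_n)\to\wt J_{k-n,n}(\G(1))$ built from the monomials $\wt\phi_{-2,1}^{\,\ell}\wt\phi_{0,1}^{\,n-\ell}$. Composing with the inverse of the first isomorphism (i.e.\ reading off the $\ell$-component of $F$ using the decomposition in Theorem~\ref{main}(ii)) produces the explicit map $(f_0,\ldots,f_n)\mapsto\sum_{\ell=0}^{n}f_\ell\,\wt\phi_{-2,1}^{\,\ell}\wt\phi_{0,1}^{\,n-\ell}$. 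Summing this over $k\equiv n\pmod{2}$ gives the second link.

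For the third isomorphism, the Eichler--Zagier theorem identifies the ring $\tilde J_{ev,\ast}(\G(1))$ with the polynomial algebra $M_{\ast}[\wt\phi_{-2,1},\wt\phi_{0,1}]$ as bigraded objects, where the grading by (weight, index) on the polynomial side matches the bigrading on the Jacobi side. Since $k\equiv n\pmod{2}$ selects precisely the even-weight part in weight $k-n$, the second direct sum recovers the even-weight forms of index $n$. The main obstacle, and the one point where care is needed, is reconciling the indexing: as written, the left-hand side of the chain fixes a single index $n$, while $M_{\ast}[\wt\phi_{-2,1},\wt\phi_{0,1}]$ naturally encompasses all indices. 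The intended reading is therefore to include a summation over $n\geq 0$ on the outermost terms as well, after which the identification becomes a direct comparison of bigraded components: a monomial $\wt\phi_{-2,1}^{\,a}\wt\phi_{0,1}^{\,b}$ has weight $-2a$ and index $a+b$, so multiplying by $f\in M_{k-n+2a}$ yields a weak Jacobi form of weight $k-n$ and index $n=a+b$, matching exactly the $\ell=a$ summand in the middle sum. Once this index bookkeeping is made explicit, no further argument is required, since each intermediate isomorphism has been supplied.
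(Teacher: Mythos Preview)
Your proposal is correct and matches the paper's approach: the paper gives no separate proof for this theorem, presenting it instead as an immediate consequence of Theorem~\ref{main}, Theorem~\ref{weak1}, and the Eichler--Zagier structure theorem, which is exactly the chain you assemble. Your observation about the indexing discrepancy in the final isomorphism (the polynomial algebra $M_*[\wt\phi_{-2,1},\wt\phi_{0,1}]$ comprising all indices, not just index $n$) is well taken and goes beyond what the paper makes explicit; the intended reading is indeed either a further direct sum over $n$ or a restriction to the index-$n$ graded piece of the polynomial algebra.
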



\bigskip

\section{\bf{Cohen-Kuznetsov liftings}}

We denote by $\mF [[X]]$ the complex algebra of formal power
series in $X$ with coefficients in $\mF$.  We fix a positive
integer $n$ and denote by $\wh{\mF}^{n+1}$ the space of $\mbb
C^{n+1}$-valued holomorphic functions on $\mcal H$.  Then the
space $\wh{\mF}^{n+1} [[X]]$ of formal power series in $X$ with
coefficients in $\wh{\mF}^{n+1}$ has the structure of a module over $\mF
[[X]]$. Note that, unlike $\mF [[X]]$, $\wh{\mF}^{n+1} [[X]]$ does not
have a ring structure. Let $\G$ be a discrete subgroup of
$SL(2,\mbb \mF)$, and let $\rho: \G \to GL(n+1, \mbb C)$ be a
representation of $\G$ in $\mbb C^{n+1}$. Then the associated
action of $\G$ on the coefficients induces an action of $\G$ on
$\wh{\mF}^{n+1} [[X]]$.

\begin{dfn} \label{D:wk}
(i) Given an integer $\gl$, a {\em Jacobi-like form of weight
$\gl$ for $\G$\/} is a formal power series $\Phi (z,X) \in \mF
[[X]]$ satisfying
\[
\Phi (\g z, \fJ (\g,z)^{-2} X) = \fJ (\g,z)^\gl e^{\fK (\g,z) X}
\Phi (z,X)
\]
for all $z \in \mcal H$ and $\g \in \G$.

(ii) A {\em vector-valued Jacobi-like form of weight $\gl$ for
$\G$ with respect to $\rho$\/} is a formal power series $\wh{\Phi}
(z,X) \in \wh{\mF}^{n+1} [[X]]$ satisfying
\begin{equation} \label{E:w1}
\wh{\Phi} (\g z, \fJ (\g,z)^{-2} X) = \fJ (\g,z)^\gl e^{\fK (\g,z)
X} \rho(\g) \wh{\Phi} (z,X)
\end{equation}
for all $z \in \mcal H$ and $\g \in \G$.
\end{dfn}

We denote by $\mJ_{\gl} (\G)$ and $\wJ_{\gl} (\G, \rho)$ the spaces
of Jacobi-like forms and vector-valued Jacobi-like forms with
respect to $\rho$, respectively, of weight $\gl$ for $\G$.

\begin{lem}\label{ind}
If $\wh{v}_{n} \in \wh{M}_{-n}^{n+1}(\G, \rho_n)$ is as in
\eqref{E:mv}, the associated formal power series given by
\begin{equation} \label{E:uj}
\wt{\Phi}_{\wv_{n}} (z,X) = \sum_{j=0}^{\infty} \frac{(-1)^j
(n-j)! D^{j} (\wv_n)} {j!} X^{j}
\end{equation}
is a vector-valued Jacobi-like form belonging to $\wJ_{-n} (\G,
\rho_n)$.
\end{lem}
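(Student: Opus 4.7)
My plan is to verify the defining transformation law \eqref{E:w1} directly by expanding both sides of
\[
\wt{\Phi}_{\wv_n}(\g z, \fJ(\g,z)^{-2} X) = \fJ(\g,z)^{-n} e^{\fK(\g,z) X} \rho_n(\g)\, \wt{\Phi}_{\wv_n}(z, X)
\]
as power series in $X$ and matching coefficients. Note that since each component of $\wv_n$ is a polynomial of degree at most $n$ in $z$, all terms with $j > n$ vanish on both sides, so the identity is a finite one. Writing $c_j = (-1)^j (n-j)!/j!$ and setting $\fJ = \fJ(\g,z)$, $\fK = \fK(\g,z)$, the coefficient of $X^m$ on both sides matches precisely when, for each $0 \le m \le n$,
\[
\wv_n^{(m)}(\g z) = \fJ^{2m-n}\, \rho_n(\g) \sum_{j=0}^m (-1)^{m-j} \binom{m}{j} \frac{(n-j)!}{(n-m)!}\, \fK^{m-j}\, \wv_n^{(j)}(z). \tag{$\star_m$}
\]

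I would prove $(\star_m)$ by induction on $m$. The base case $m=0$ is just the modularity of $\wv_n$ established in Proposition \ref{syme}. For the inductive step, differentiate $(\star_m)$ with respect to $z$, using $D\fJ = c$ and $D\fK = -\fK^2$ (and the chain rule on the left-hand side, which supplies a factor $\fJ^{-2}$), then multiply through by $\fJ^2$. Collecting the resulting terms by the monomial $\fK^{m+1-j} \wv_n^{(j)}(z)$ for $0 \le j \le m+1$, the matching of coefficients with $(\star_{m+1})$ reduces to the elementary binomial identity
\[
(n-m-j)\binom{m}{j} + (n-j+1)\binom{m}{j-1} = (n-m)\binom{m+1}{j},
\]
which follows immediately from Pascal's rule together with $j\binom{m}{j} = (m-j+1)\binom{m}{j-1}$.

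An alternative, more conceptual route is to recognize $\wt{\Phi}_{\wv_n}$ as (a nonzero scalar multiple of) the Cohen--Kuznetsov lifting of $\wv_n$: for $f \in \wh M^{n+1}_k(\G,\rho)$, the series $\sum_{j\ge 0} f^{(j)} X^j / (j!\,(k)_j)$ is a vector-valued Jacobi-like form of weight $k$ with respect to $\rho$, and specializing to $k=-n$, $f=\wv_n$ yields $\wt{\Phi}_{\wv_n}/n!$ because $(-n)_j = (-1)^j n!/(n-j)!$ for $0 \le j \le n$. The apparent singularity at $j=n+1$ from $(-n)_{n+1}=0$ is harmless since $\wv_n^{(j)}=0$ there as well. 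The main obstacle in the direct proof is simply the bookkeeping of the factorials, signs, and binomial coefficients in the induction, but this collapses to the one-line Pascal-type identity above, so the argument is essentially routine.
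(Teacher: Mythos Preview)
Your proposal is correct and follows essentially the same route as the paper: the key identity you call $(\star_m)$ is exactly the paper's formula \eqref{E:bv} (after unpacking $(\cdot)\mid_{-n+2\nu}\alpha$ and using $\wv_n\mid_{-n}\gamma=\rho_n(\gamma)\wv_n$), which the paper also asserts ``can be shown by induction'' and then feeds into the coefficient comparison. The only difference is that you actually carry out the inductive step and reduce it to the Pascal-type identity, whereas the paper leaves this as an exercise; your alternative Cohen--Kuznetsov remark matches the paper's closing comment after the lemma.
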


\begin{proof}
Given $\ga \in SL (2, \bR)$, it can be shown by induction that
\begin{equation} \label{E:bv}
((D^{\nu} (\wv_n)) \mid_{-n +2\nu} \ga) (z) = \sum_{\ell=0}^{\nu}
\frac {(-1)^{\nu -\ell} \nu! (n-\ell)!} {\ell! (\nu -\ell)!
(n-\nu)!} \fK (\g,z)^{\nu -\ell} D^\ell (\wv_n \mid_{-n} \ga) (z)
\end{equation}
for each $\nu \geq 0$ and $z \in \mH$.  If $\g \in \G$ and if
$\wt{\Phi}_{\wv_{n}} (z,X)$ is the formal power series given by
\eqref{E:uj}, we have
\begin{align*}
\wt{\Phi}_{\wv_{n}} (\g z, \fJ (\g,z)^{-2} X) &= \sum^\infty_{j=0}
\frac {(-1)^j (n-j)!} {j!} (D^j \wv_n) (\g z) \fJ (\g,z)^{-2j} X^j\\
&= \sum^\infty_{j=0} \frac {(-1)^j (n-j)!} {j!} ((D^j \wv_n)
\mid_{-n +2j} \g) (z) \fJ (\g,z)^{-n} X^j\\
&= \sum^\infty_{j=0} \frac {(-1)^j (n-j)!} {j!} \fJ (\g,z)^{-n}
X^j\\
&\hspace{.5in} \times \sum^j_{\ell=0} \frac {(-1)^{j-\ell} j!
(n-\ell)!} {\ell! (j-\ell)! (n-j)!} \fK (\g,z)^{j-\ell} D^\ell
(\wv_n \mid_{-n} \g) (z)\\
&= \fJ (\g,z)^{-n} \sum^\infty_{j=0} \sum^j_{\ell=0} \frac
{(-1)^{\ell} (n-\ell)!} {\ell! (j-\ell)!} \fK (\g,z)^{j-\ell}
(D^\ell \wv_n) (z) X^j ,
\end{align*}
where we used the relation $\wv_n \mid_{-n} \g = \wv_n$.  On the other
hand, we see that
\begin{align*}
\fJ (\g,z)^{-n} e^{\fK (\g,z) X} \wt{\Phi}_{\wv_n} (z, X) &= \fJ
(\g,z)^{-n} \sum_{\ell=0}^{\infty} \sum_{j=0}^{\infty}
\frac{(-1)^j (n-j)! (D^{j} \wv_n) (z)} {j! \ell!} \fK (\g,z)^\ell
X^{j
+\ell}\\
&= \fJ (\g,z)^{-n} \sum_{p=0}^{\infty} \sum_{j=0}^{p} \frac{(-1)^j
(n-j)! (D^{j} \wv_n) (z)} {j! (p-j)!} \fK (\g,z)^{p-j} X^{p}.
\end{align*}
Hence we obtain
\begin{equation} \label{E:hx}
\wt{\Phi}_{\wv_{n}}(\g z, \fJ (\g,z)^{-2} X)= \fJ (\g,z)^{-n}
e^{\fK (\g,z) X} \rho_n(\g) \wt{\Phi}_{\wv_n} (z, X) ,
\end{equation}
and therefore the lemma follows.
\end{proof}

The vector-valued Jacobi-like form $\wt{\Phi}_{\wv_{n}} (z,X) \in \wJ_{-n}
(\G, \rho_n)$ may be regarded as the vector-valued version of the
Cohen-Kuznetsov lifting (cf.\ \cite{CM97, Za94}) of the vector-valued
modular form $\wv_n \in \wh{M}^{n+1}_{-n} (\G, \rho_n)$.

\bigskip

\section{\bf{Proof of main theorems}} \label{S:pm}

\noindent%
\textbf{Proof of Theorem \ref{main}}\, We consider an element
\[ \wh{g} = (g_0, g_1, \ldots, g_n) \in \bigoplus^n_{\ell =0} M_{k-n
+2\ell} (\G) \]%
with $g_\ell \in M_{k -n +2\ell}$ for each $\ell \in \{ 0, 1,
\ldots, n\}$.  Then the Cohen-Kuznetsov lifting of the modular
form $g_\ell$ given by
\begin{equation} \label{E:5x}
\wt{g}_\ell (z,X) = \sum^\infty_{j=0} \frac {g^{(j)}_\ell (z)} {j!
(j +k -n + 2\ell -1)!} X^j
\end{equation}
is a Jacobi-like form belonging to $\mJ_{k -n +2\ell} (\G)$ and
therefore satisfies
\begin{equation} \label{E:d8}
\wt{g}_\ell (\g z, \fJ (\g, z)^{-2} X) = \fJ (\g,z)^{k-n +2\ell} e^{\fK
(\g, z) X} \wt{g}_\ell (z,X)
\end{equation}
for all $\g \in \G$ (see e.g. \cite{CM97, Za94}). Similarly, by
Lemma \ref{ind} the vector-valued version of the Cohen-Kuznetsov
lifting of $\wv_n \in \wh{M}_{-n} (\G, \rho_n)$ given by
\begin{equation} \label{E:6x}
\wh{\Phi}_{\wv_n} (z,X) =\sum^\infty_{j=0} \frac {(-1)^j (n-j)!
\wv^{(j)}_n (z)} {j!} X^j ,
\end{equation}
satisfies
\begin{equation} \label{E:d9}
\wh{\Phi}_{\wh{v}_n} (\g z, \fJ (\g, z)^{-2} X)= \fJ (\g, z)^{-n}
e^{\fK (\g, z) X} \rho_n (\g) \wh{\Phi}_{\wh{v}_n} (z,X)
\end{equation}
for all $\g \in \G$.  We now set
\[ \wh{F} (z,X) = \wt{g}_\ell (z,-X) \wh{\Phi}_{\wv_n} (z,X) .\]
Then, using \eqref{E:d8} and \eqref{E:d9}, we have
\begin{align} \label{E:e7}
\wh{F} (\g z, \fJ(\g, z)^{-2} X) &= \fJ (\g, z)^{k-n +2\ell} e^{-
\fK (\g,z) X} \wt{g}_\ell (z,-X)\\
&\hspace{.6in} \times \fJ (\g, z)^{-n} e^{\fK (\g,z)
X} \rho_n (\g) \wh{\Phi}_{\wv_n} (z,X) \notag\\
&= \fJ (\g, z)^{k-2n +2\ell} \rho_n (\g) \wh{F} (z,X) \notag
\end{align}
for all $\g \in \G$.  Hence, if we write
\[ \wh{F} (z,X) = \sum^\infty_{j=0} \wh{\eta}^{g_\ell}_j (z) X^j ,\]
from \eqref{E:e7} we see that
\begin{equation} \label{E:pf1}
\wh{\eta}^{\phi_\ell}_j \in \wh{M}^{n }_{k-2n +2\ell +2j} (\G,
\rho_n)
\end{equation}
for each $j \geq 0$.
On the other hand, using \eqref{E:5x} and \eqref{E:6x}, we obtain
\begin{align*}
\wh{F} (z,X) &= \wt{g}_\ell (z,-X) \wh{\Phi}_{\wv_n} (z,X)\\
&= \sum^\infty_{r=0} \sum^\infty_{s=0} \frac {(-1)^{r +s} (n-s)!
g^{(r)}_\ell (z) \wv^{(s)}_n (z)} {r! s! (r +k -n + 2\ell -1)!}
X^{r +s}\\
&= \sum^\infty_{j=0} \sum^j_{r=0} \frac {(-1)^{j} (n-j +r)!
g^{(r)}_\ell (z) \wv^{(j-r)}_n (z)} {r! (j-r)! (r +k -n + 2\ell
-1)!} X^{j};
\end{align*}
hence we have
\[ \wh{\eta}^{ g_\ell}_j = \sum^j_{r=0} \frac {(-1)^{j} (n-j
+r)! g^{(r)}_\ell \wv^{(j-r)}_n} {r! (j-r)! (r +k -n +
2\ell -1)!} \]%
for all $j \geq 0$.  Furthermore, using \eqref{E:he}, for $0 \leq \ell \leq
n$ and $j \geq 0$ we have
\begin{align*}
[g_\ell, \wh{v}_n]^{(k-n +2\ell, -n)}_{j} &= \sum^j_{r=0} (-1)^r
\bi {k-n +2\ell +j -1} {j-r} \bi {-n+j -1} {r} g^{(r)}_\ell
\wv^{(j-r)}_n\\
&= \sum^j_{r=0} (-1)^r \frac {(k-n +2\ell +j -1)!} {(j-r)! (k-n
+2\ell +r -1)!}\\
&\hspace{.9in} \times \frac {(-n+j -1) \cdots (-n+j -r)} {r!}
g^{(r)}_\ell \wv^{(j-r)}_n\\
&= \sum^j_{r=0} \frac {(k-n +2\ell +j -1)! (n+r-j)!} {(j-r)! (k-n
+2\ell +r -1)! r! (n-j)!} g^{(r)}_\ell \wv^{(j-r)}_n\\
&= (-1)^j \frac {(k-n +2\ell +j -1)!} {(n-j)!} \wh{\eta}^{
g_\ell}_j ,
\end{align*}
which belongs to $\wh{M}^{n+1}_{k-2n +2\ell +2j} (\G, \rho_n)$ by
\eqref{E:pf1}.  Hence it follows that
\[ [g_\ell, \wh{v}_n]^{(k-n +2\ell, -n)}_{n-\ell} \in
\wh{M}^{n+1}_{k} (\G, \rho_n) ,\]%
which proves (i). We now consider a vector-valued modular form $F
\in \wh{M}_k^n (\G, \rho_n)$, and assume that
\[ L_n(z)^{-1} F (z) = {^t}(f_0 (z), f_1 (z), \ldots, f_n (z)) \]%
for all $z \in \mH$ with $f_0, \ldots, f_n \in \mF$.  Let $t$ be the first
positive integer such that $f_t$ is not identically zero.
Then by Lemma \ref{KS-2} the function $f_t$ is a modular form
belonging to $M_{k-n+2t} (\G)$.  Here, we note that $k-n+2t>0 $
because otherwise $M_{k-n+2t}(\G)=\{0\}$.  Using Lemma \ref{KS-1},
we see that the first nonzero entry of the vector
\[ L_n (z)^{-1} \mmV_{k,n,t} (f_t) (z) = L_n (z)^{-1} [f_t (z),
\wv_n (z)]^{(k-n +2t,-n)}_{n-t} \]%
is the $t$-th component which is equal to
\[ (n-t)! \ga_{k-n +2t,0} f_t (z) .\]
Thus, if we set
\[ \xi_{0,t} = \frac 1{(n-t)! \ga_{k-n +2t,0}} \mmV_{k,n,t} (f_t)
\in {\rm Im}\, \mmV_{k,n,t} = \wh{\mG}^n_{k-n+2t} (\G, \rho_n)  ,\]%
the first $t+1$ entries of the vector
\[ L_n (z)^{-1} (F(z) - \xi_{0,t} (z)) \]
are zero with $F - \xi_{0,t} \in \wh{M}^{n+1}_k (\G, \rho_n)$.
Applying the same argument to the the vector-valued modular form
$F-\xi_{0,t}$, we can find an element $\xi_{1, t+1} \in
\wh{\mG}^n_{k-n+2t +2} (\G, \rho_n)$ such that the first $t+1$
components of the vector
\[ L_n (z)^{-1} (F(z) - \xi_{0,t} (z)- \xi_{1,t+1} (z)) \]%
are all zero.  By repeating this process we obtain the expression
\[ F = \sum^{n+2 -t}_{\ell =0} \xi_{\ell, t +\ell} \in
\bigoplus^{n+2 -t}_{\ell =0} \wh{\mG}^n_{k-n+2t +2\ell} (\G,
\rho_n) ,\]%
which implies (ii).  In order to prove (iii), given $\ell \in \{
0, 1, \ldots, n \}$, we assume that
\[ \mmV_{k, n, \ell} (g) = \mmV_{k, n, \ell} (h) \]
with $g,h \in M_{k-n -2\ell} (\G)$.  Then from \eqref{E:wa} and
Lemma \ref{KS-1} we see that
\[ 0 = [g-h, \wv_n]_{n-\ell}^{(k-n +2\ell, -n)} = L_n (z) \cdot
{^t} (f_0 (z), f_1 (z), \ldots, f_n (z)) \]%
for all $z \in \mH$, where
\[ f_j = \begin{cases}
0 & \text{for $0 \leq j \leq \ell-1$;}\\
(n-j)! \ga_{k -n +2\ell, j -\ell} D^{j -\ell} (g-h) & \text{for
$\ell \leq j \leq n$.}
\end{cases}\]%
Thus we have
\[ 0 = f_\ell = (n-\ell)! \ga_{k -n +2\ell, 0} (g-h) ,\]
and therefore $g=h$, which show that $\mmV_{k, n, \ell}$ is
injective; hence it follows that the map
\[ \mmV_{k, n, \ell}: M_{k-n+2\ell}(\G) \to
\wh{\mG}_{k-n+2\ell}^{n+1} (\G,\rho_n) = {\rm Im}\,
(\mmV_{k, n, \ell}) \]%
is an isomorphism verifying (iii).

\bigskip

\noindent%
\textbf{Proof of Theorem \ref{main2}}\, Given integers $k$ and $n$
with $n \geq 0$, using the bilinear map \eqref{E:bi} and the fact that
$\ww_n \in \wh{M}^{n+1}_{-n} (\G, \rho^*_n)$ (see Proposition \ref{syme}),
we see easily that the map $\mW_{k,n}$ is well-defined; hence the theorem
follows.

\bigskip

\noindent%
\textbf{Proof of Theorem \ref{main00}}\,

 Given $k \in \bZ$, by
Theorem \ref{main}(iii) there is a canonical isomorphism
\[ \wh{M}_k^{n} (\G,\rho_n) \cong \bigoplus^n_{\ell =0}
M_{k-n +2\ell} (\G) .\]%
However, using Remark \ref{R:8n}(i), we can identify the direct
sum $\bigoplus^n_{\ell =0} M_{k-n +2\ell}(\G)$ with the space
$MP^n_{k-n} (\G)$ of modular polynomials.  Since
\[ MP^n_{k-n} (\G) \cong QP^n_{k+n} (\G) \]
by \eqref{E:nw}, we obtain the desired isomorphism. By combining
\eqref{E:m} with the isomorphism \eqref{E:sp} we also obtain a
correspondence
\[ \wh{M}_k^{n+1} (\G, \rho_n) \cong QM^n_{k+n} (\G) \]
between vector-valued modular forms and quasimodular forms.

\bigskip

\noindent%
\textbf{Proof of Theorem \ref{main0}}\,
Given $k \in \bZ$, we consider a quasimodular polynomial $F (z,X) \in QP_{k+n}^n
(\G)$ of the form
\[ F(z,X) = \sum^n_{r=0} f_r (z) X^r .\]
Since we already have the isomorphism \eqref{E:6w}, it suffices to show
that $\mV_n (F (z,X))$ belongs to $\wh{M}^{n+1}_{k} (\G, \rho_n)$.  First,
we note that $F(z,X)$ can be lifted to a Jacobi-like form
\[ \Phi_F (z,X) = \sum^{\infty}_{\ell=0} \phi_\ell (z) X^\ell \in \mJ_{k-n} (\G)
\]  with
\[ f_r = \frac 1{r!} \phi_{n-r} \]
for $0 \leq r \leq n$ satisfying
\begin{equation} \label{E:hy}
\Phi_F (\g z, \fJ (\g,z)^{-2} X) = \fJ (\g,z)^{k-n} e^{\fK (\g,z) X}
\Phi_F (z,X)
\end{equation}
for all $\g \in \G$ (cf. \cite{CL08}).  If $\wt{\Phi}_{\wv_{n}} (z,X) \in
\wJ_{-n} (\G, \rho_n)$ is as in \eqref{ind}, we set
\begin{align*}
\Psi (z,X) &= \Phi_F (z,-X) \wt{\Phi}_{\wv_{n}} (z,X)\\
&= \sum_{j=0}^{\infty} \sum^n_{\ell=0} \frac{(-1)^{j+\ell} (n-j)!} {j!}
D^{j} (\wv_n) (z) \phi_\ell (z) X^{j +\ell}\\
&= \sum_{r=0}^{\infty} \sum^r_{\ell=0} \frac{(-1)^{r} (n-r +\ell)!} {(r-\ell)!}
D^{r-\ell} (\wv_n) (z) \phi_\ell (z) X^{r} ,
\end{align*}
assuming that $\phi_\ell =0$ for $\ell >n$.  Thus we may write
\begin{equation} \label{E:gr}
\Psi (z) = \sum^\infty_{r=0} \psi_r (z) X^r ,
\end{equation}
where
\begin{align*}
\psi_r &= \sum^r_{\ell=0} \frac{(-1)^{r} (n-r +\ell)!} {(r-\ell)!}
D^{r-\ell} (\wv_n) \phi_\ell\\
&= \sum^r_{\ell=0} \frac{(-1)^{r} (n-r +\ell)! (n-\ell)!} {(r-\ell)!}
D^{r-\ell} (\wv_n) f_{n-\ell}\\
&= \sum^r_{\ell=0} \frac{(-1)^{r} (2n-r -\ell)! \ell!} {(r-n +\ell)!}
D^{r-n +\ell} (\wv_n) f_{\ell}
\end{align*}
for $r \geq 0$.  Using \eqref{E:hx} and \eqref{E:hy}, we see that
\[ \Psi (\g z, \fJ (\g,z)^{-2} X) = \fJ (\g,z)^{k-2n} \rho_n (\g) \Psi
(z,X) .\]
for $\g \in \G$.  From this and \eqref{E:gr} it follows that
\[ \psi_r \in \wh{M}^{n+1}_{k -2n +2r} (\G, \rho_n) \]
for each $r \geq 0$.  In particular, we obtain
\[ \psi_n  = \sum^n_{\ell=0} (-1)^{n} (n -\ell)! D^{\ell} (\wv_n)
f_{\ell} \in \wh{M}^{n+1}_{k} (\G, \rho_n) ,\]
and therefore the theorem follows from this and the identity $\psi_n (z) =
\mU_{n} (F(z,X))$.

\providecommand{\bysame}{\leavevmode\hbox
to3em{\hrulefill}\thinspace}
\providecommand{\MR}{\relax\ifhmode\unskip\space\fi MR }
\providecommand{\MRhref}[2]{%
  \href{http://www.ams.org/mathscinet-getitem?mr=#1}{#2}
} \providecommand{\href}[2]{#2}

\end{document}